\documentclass[12pt]{amsart}
\usepackage{amsthm,amsmath,amsfonts,amssymb,amscd,mathrsfs,hyperref,cleveref}
\usepackage{latexsym,mathabx}
\usepackage{amsaddr}
\usepackage{cite}

\begingroup
\makeatletter
   \@for\theoremstyle:=definition,remark,plain\do{%
     \expandafter\g@addto@macro\csname th@\theoremstyle\endcsname{%
        \addtolength\thm@preskip\parskip
     }%
   }
\endgroup

\theoremstyle{plain}
\newtheorem{thm}{Theorem}[section]
\newtheorem{lem}{Lemma}
\newtheorem*{lem*}{Lemma}

\theoremstyle{definition}
\newtheorem{dfn}{Definition}

\crefname{thm}{theorem}{theorems}
\crefname{lem}{lemma}{lemmas}

\newcommand{\bs}{\boldsymbol}

\newcommand{\mc}{\mathcal}
\renewcommand{\mod}{\operatorname{mod}}

\numberwithin{equation}{section}

\def \a{\alpha} \def \b{\beta} \def \d{\delta} \def \e{\varepsilon} \def \g{\gamma}  \def \l{\lambda} \def \s{\sigma}  

\setlength{\leftmargini}{1.75em} \setlength{\leftmarginii}{1.75em}
\renewcommand{\labelenumi}{\setlength{\labelwidth}{\leftmargin}
   \addtolength{\labelwidth}{-\labelsep}
   \hbox to \labelwidth{\theenumi.\hfill}}

\newcommand{\twosum}[2]{\sum_{\substack{#1\\#2}}}
\newcommand{\threesum}[3]{\sum_{\substack{#1\\#2\\#3}}}
\newcommand{\R}{\mathbb R}
\renewcommand{\lessapprox}{\llcurly}
\renewcommand{\gtrapprox}{\ggcurly}
\begin{document}
\title{Bounded intervals containing many primes}
\author{R.C. Baker and A.J. Irving}

\maketitle

\begin{abstract}
By combining a sieve method of Harman with the work of Maynard and Tao we show that 
$$\liminf_{n\rightarrow \infty}(p_{n+m}-p_n)\ll \exp(3.815m).$$
\end{abstract}

\section{Introduction}

For any natural number $m\geq 1$ let
$$H_m=\liminf_{n\rightarrow \infty}(p_{n+m}-p_n),$$ 
where $p_n$ denotes the $n^{\mathrm{th}}$ prime.  It was shown by Maynard \cite{maynardsmall} that $H_m$ is finite for all $m$ and that it satisfies 
$$H_m\ll m^3 \exp(4m).$$
This bound was improved by Polymath \cite[Theorem 1.4(vi)]{polymath8b} to 
\begin{equation}\label{polybound}
H_m\ll m\exp\left(\left(4-\frac{28}{157}\right)m\right).
\end{equation}
The smaller exponent, $4-\frac{28}{157}=3.821\ldots$, was obtained by an application of a result of Polymath \cite{polymath8a} which extends the Bombieri-Vinogradov theorem to smooth moduli which are slightly larger than $x^{\frac12}$.  This latter result was an improvement of an earlier theorem of Zhang \cite{zhang}, who used it to obtain the first proof that $H_1$ is finite.  In the present work we combine the methods of Maynard and Polymath with a sieve procedure developed by Harman \cite{har83, har96, harmanbook} to obtain the following small improvement on \eqref{polybound}.

\begin{thm}\label{mainthm}
We have 
$$H_m\ll \exp(3.815m).$$
\end{thm}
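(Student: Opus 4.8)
The plan is to run the Maynard--Tao multidimensional sieve in the form developed by Polymath \cite{polymath8b}, the new ingredient being an improved distribution estimate for primes in arithmetic progressions to smooth moduli, which we supply via Harman's sieve. Fix $m$ large and a parameter $k$ (to be chosen of size $e^{O(m)}$), and let $\mathcal H=\{h_1<\dots<h_k\}$ be an admissible $k$-tuple of diameter $h_k-h_1\le(1+o(1))k\log k$, e.g. the first $k$ primes exceeding $k$. For $N\to\infty$ put $W=\prod_{p\le D_0}p$ with $D_0=\log\log\log N$, fix $n_0$ with $(n_0+h_i,W)=1$ for every $i$, and form the Maynard weights
$$w_n=\Bigl(\ \sum_{\substack{d_i\mid n+h_i\\ 1\le i\le k}}\lambda_{d_1,\dots,d_k}\Bigr)^2,$$
with $\lambda_{\mathbf d}$ supported on squarefree $\prod_i d_i\le R$, $R=N^{\t/2-\e}$, and of the usual Maynard shape built from a fixed piecewise-smooth function $F$ on (a truncation of) the simplex $\{\mathbf t\ge 0:\sum_i t_i\le 1\}$. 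It suffices to exhibit $N$ for which
$$S:=\sum_{\substack{N<n\le 2N\\ n\equiv n_0\,(\mathrm{mod}\ W)}}\Bigl(\sum_{i=1}^k\mathbf 1_{\mathbb P}(n+h_i)-m\Bigr)w_n>0,$$
since then some $n$ has at least $m+1$ of the $n+h_i$ prime, whence $H_m\le h_k-h_1$.

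The sum $\sum_n w_n$ is evaluated by the standard computation and contributes $(1+o(1))\tfrac{N}{W}\bigl(\tfrac{W}{\phi(W)}\bigr)^k(\log R)^k I_k(F)$, with $I_k(F)$ the usual integral functional of $F$. Expanding $w_n$ reduces each $\sum_n\mathbf 1_{\mathbb P}(n+h_i)w_n$ to a linear combination, over pairs $\mathbf d,\mathbf e$, of counts of primes $n+h_i$ in residue classes to modulus $q=W\prod_j[d_j,e_j]$, with $q$ running up to $q\asymp WR^2=N^{\t-2\e+o(1)}$. Were an Elliott--Halberstam estimate available at level $\t$ for all such $q$, each of these would equal $(1+o(1))\tfrac{N}{\phi(W)\log N}\bigl(\tfrac{W}{\phi(W)}\bigr)^k(\log R)^{k+1}J_k^{(i)}(F)$, and $S>0$ would reduce, on optimizing $F$, to $\tfrac{\t}{2}M_k>m$, where $M_k$ is the Maynard variational constant, known to satisfy $M_k=(1+o(1))\log k$.

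Unconditionally we control such moduli only when $q$ is $N^{\d}$-smooth (or, following Polymath, $N^{\d}$-densely-divisible), and only at level $\t=\tfrac12+2\varpi$ with $(\varpi,\d)$ in a restricted range --- the Motohashi--Pintz--Zhang estimate of Zhang \cite{zhang}, sharpened by Polymath \cite{polymath8a}. The point of the present paper is to re-derive and widen this estimate by replacing the combinatorial identity (of Heath--Brown type) used in \cite{polymath8a} with Harman's sieve \cite{har83,har96,harmanbook}. Thus one writes $\mathbf 1_{\mathbb P}(n+h_i)=\rho(n)+(\text{nonnegative discarded terms})$, where $\rho$ comes from a Buchstab iteration truncated so that every surviving piece is either of \emph{Type I} --- the modulus multiplies a single smooth variable of length $\le N/q$, handled by the dispersion-method and large-sieve inputs of \cite{polymath8a} --- or of \emph{Type II} --- a bilinear form whose variables lie in ranges where the Weil/Deligne exponential-sum estimates of \cite{zhang,polymath8a} give cancellation --- while the remaining Buchstab terms are dropped using their sign in this minorant for the indicator of the primes. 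Because Harman's framework lets one push the Buchstab cutoffs further than a plain asymptotic formula would allow, one gets $\sum_n\rho(n)\ge(c+o(1))\tfrac{N}{\log N}$ for an explicit constant $c<1$, with the moduli now permitted up to $N^{\t'-2\e}$ for an effective level $\t'$ exceeding $\tfrac12+2\varpi$; it is the balancing of the cutoffs against $(\varpi,\d)$ that produces the numerical gain over \cite{polymath8b}.

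Feeding this minorant into the Maynard--Polymath criterion --- in which, because of the constraint on $q$, the functional $M_k$ is replaced by a constrained version as in \cite{polymath8b} --- one finds that $S>0$ holds once, schematically, $\tfrac{\t'}{2}\,c\,M_k>m$. Since $M_k=(1+o(1))\log k$, this is satisfied for $k\asymp\exp\!\bigl(\tfrac{2m}{\t' c}\bigr)$ as soon as $\tfrac{2}{\t' c}<3.815$, which the optimization delivers. With $\mathcal H$ admissible of diameter $\le(1+o(1))k\log k$ we then obtain $H_m\le h_k-h_1\ll k\log k\ll m\exp\!\bigl(\tfrac{2m}{\t' c}\bigr)\ll\exp(3.815m)$, the polynomial factor disappearing because $\tfrac{2}{\t' c}$ is strictly below $3.815$. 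I expect the main obstacle to be the Type~II estimates for smooth moduli in precisely the bilinear ranges that Harman's sieve throws up --- adapting Zhang's dispersion argument and the accompanying bounds for incomplete Kloosterman-type sums to exactly these shapes --- together with the delicate multi-parameter optimization, over the Buchstab cutoffs, the parameters $(\varpi,\d)$ and the sieve function $F$, required to push $\t' c\,M_k/(2m)$ above $1$ with enough room that the exponent comes out at $3.815$.
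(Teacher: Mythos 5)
Your outline captures the paper's overall strategy accurately: construct, via Harman's sieve, a minorant $\rho$ for the prime indicator that is equidistributed in arithmetic progressions to smooth moduli at a level $\theta$ slightly beyond Polymath's $\tfrac12+\tfrac{7}{300}$, accept a small density loss $c_1$, feed it into the Maynard--Polymath criterion to obtain $H_m\ll\exp(c_0 m)$ for any $c_0>2/(\theta(1-c_1))$, and verify numerically that $2/(\theta(1-c_1))<3.815$. Two points are worth correcting, however. First, you locate the main obstacle in the Type~II (bilinear) estimates, but those are taken unchanged from Polymath's Theorem~2.8; the genuinely new analytic ingredient is a \emph{widened Type~I estimate} (a variant of Polymath's dispersion argument allowing the smooth variable to be as short as $x^{1/4+7\varpi+2\delta}$), and your sketch also omits Polymath's Type~III estimate for triple smooth convolutions together with the combinatorial lemmas needed to show that the surviving triple-prime pieces fall into Type~I/II/III configurations. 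These two inputs are precisely what let the Buchstab decomposition close with a discard as small as $c_1<8\times 10^{-6}$; without them the loss would be far too large to beat $\exp(3.821m)$. Second, Harman's sieve does not replace Heath--Brown's identity here: the identity is still invoked inside the argument to reduce sums over three large primes to coefficient sequences before the Type~I/II/III estimates are applied.
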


This theorem will be proved by constructing a minorant for the indicator function of the primes for which we can prove a slightly stronger equidistribution theorem in arithmetic progressions to smooth moduli.  Given such a minorant, the following Lemma shows how Theorem \ref{mainthm} may be deduced.

\begin{lem}\label{sievelem}
Suppose that for all large $x$ there exists a function $\rho(n):[x,2x]\rightarrow \R$ satisfying the following properties:

\begin{enumerate}
\item $\rho(n)$ is a minorant for the indicator function of the primes, that is 
$$\rho(n)\leq \begin{cases}
1 & n\text{ is prime}\\
0 & \text{otherwise.}\\
\end{cases}$$

\item If $\rho(n)\ne 0$ then all prime factors of $n$ exceed $x^\xi$, for some fixed $\xi>0$.

\item The function $\rho(n)$ has exponent of distribution $\theta$ to smooth moduli (see Definition \ref{smoothlevel} below).   

\item We have 
$$\sum_{x\leq n\leq 2x}\rho(n)=(1-c_1+o(1))\frac{x}{\log x}.$$
\end{enumerate}
We then have 
$$H_m\ll \exp(c_0m)$$
for any 
$$c_0>
\frac{2m}{\theta(1-c_1)}.$$
The implied constant may depend on $c_0$.  
\end{lem}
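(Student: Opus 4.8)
The proof should follow the now-standard Maynard–Tao machinery, but with the indicator function of the primes replaced throughout by the minorant $\rho$. The plan is to fix an admissible $k$-tuple $\mathcal H=\{h_1,\dots,h_k\}$ with all $h_i\le C\log k$ for a suitable constant, and to consider the sum
\[
S=\sum_{\substack{x\le n\le 2x\\ n\equiv v_0\ (\mathrm{mod}\ W)}}\left(\sum_{i=1}^k\rho(n+h_i)-(m-1)\right)\left(\sum_{\substack{\mathbf d=(d_1,\dots,d_k)\\ d_i\mid n+h_i}}\lambda_{\mathbf d}\right)^{\!2},
\]
where $W=\prod_{p\le D_0}p$ with $D_0$ a slowly growing parameter, $v_0$ is chosen coprime to $W$ with $v_0+h_i$ coprime to $W$ for each $i$ (possible by admissibility), and the weights $\lambda_{\mathbf d}$ are supported on squarefree $\mathbf d$ with $\prod d_i\le R=x^{\theta/2-\delta}$ and given by the usual smooth-function parametrisation $\lambda_{\mathbf d}\propto\mu(\prod d_i)(\prod d_i)F\!\left(\frac{\log d_1}{\log R},\dots\right)$. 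If $S>0$ then some $n$ in the range has at least $m$ of the $n+h_i$ counted by $\rho$ with positive weight; by property (1) these are primes, and since all the $h_i$ are $O(\log k)$ this forces $p_{n'+m}-p_{n'}\le \max_i h_i\ll\log k$ for the relevant index, giving $H_m\ll \log k$. One then checks $k\ll\exp(c_0 m)$ suffices, so $H_m\ll m\ll\exp(c_0m)$ as claimed (the polynomial factor in $m$ is absorbed into the strict inequality $c_0>\frac{2m}{\theta(1-c_1)}$).

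Next I would evaluate the two pieces of $S$ asymptotically. The denominator-type sum $\sum(\sum\lambda_{\mathbf d})^2$ is handled by property (3): since $\rho$ is only a minorant, for the "trivial" part we just need the level-of-distribution input for the constant function (equivalently, this is the classical Maynard computation, unaffected by $\rho$), yielding a main term of the shape $\frac{\varphi(W)^k x}{W^{k+1}(\log R)^k}\,I_k(F)$ for the standard quadratic functional $I_k(F)$. For each of the $k$ crossed sums $\sum_n\rho(n+h_i)(\sum\lambda_{\mathbf d})^2$, I expand the square, swap summation, and am left with inner sums of $\rho$ over the progression $n+h_i\equiv 0$ to modulus $[\,\mathbf d,\mathbf e\,]W$ (with the $i$-th coordinates forced so that the congruence is consistent with a prime argument). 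Here property (3), the exponent of distribution $\theta$ to smooth moduli, is exactly what lets us replace these inner sums by $\frac{1}{\varphi([\mathbf d,\mathbf e]W)}\sum_{x\le n\le 2x}\rho(n)$ with acceptable total error, and property (4) then supplies $\sum_n\rho(n)=(1-c_1+o(1))\frac{x}{\log x}$. Summing the resulting main terms over $\mathbf d,\mathbf e$ produces the usual $J_k^{(i)}(F)$ functionals, so that
\[
S\ge\left((1-c_1)\sum_{i=1}^k J_k^{(i)}(F)-(m-1)I_k(F)+o(1)\right)\frac{\varphi(W)^k x}{W^{k+1}(\log R)^{k-1}}\cdot\frac{1}{\log x}.
\]

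The positivity condition becomes $(1-c_1)\,\frac{\sum_i J_k^{(i)}(F)}{I_k(F)}\cdot\frac{\log R}{\log x}>m-1$, i.e. $(1-c_1)\cdot\frac{\theta}{2}\cdot M_k>m-1+o(1)$ where $M_k=\sup_F\frac{\sum_i J_k^{(i)}(F)}{I_k(F)}$. I would then invoke the known lower bound $M_k\ge\log k-O(1)$ (Maynard's estimate, valid for all large $k$), so the condition holds as soon as $\frac{\theta(1-c_1)}{2}\log k>m-1+o(1)$, i.e. for $\log k$ slightly larger than $\frac{2(m-1)}{\theta(1-c_1)}$, hence certainly for $\log k=\big(1+o(1)\big)\frac{2m}{\theta(1-c_1)}$; with $H_m\ll\log k$ this gives $H_m\ll\exp(c_0m)$ for any $c_0>\frac{2m}{\theta(1-c_1)}$.

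**Main obstacle.** The routine part is the combinatorics of the $\lambda_{\mathbf d}$-expansion and the passage to the functionals $I_k,J_k^{(i)}$; this is identical to Maynard's argument once the arithmetic inputs are in place. The one genuine point to get right is the error-term bookkeeping in the crossed sums: property (3) gives equidistribution of $\rho$ only to \emph{smooth} moduli, whereas the moduli $[\mathbf d,\mathbf e]$ arising from lcm's of squarefree numbers below $R$ are smooth only if the individual $d_i,e_i$ are, so I must impose smoothness on the support of $\lambda_{\mathbf d}$ from the outset and verify (as in the Polymath/Zhang setup) that restricting $F$ to this smaller class of weights costs nothing in the variational problem — equivalently that the truncated functionals still achieve $M_k\ge\log k-O(1)$. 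That, together with checking the admissible tuple can be chosen with diameter $O(\log k)$ even after the smoothness restriction on $W$, is where the care is needed; everything else is bookkeeping.
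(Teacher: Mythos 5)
Your proposal is essentially the same argument as the paper's: both run the Maynard--Tao sieve with $\rho$ in place of the prime indicator, use property (2) to trivialise the $i_0$-th divisor weight, invoke the exponent of distribution to smooth moduli (which, as you correctly flag, forces the truncated variational constant $M_k^{[\alpha]}$ from Polymath rather than the unrestricted $M_k$), and use $M_k^{[\alpha]}\ge\log k-O_\alpha(1)$ to close the loop; the paper packages these steps as modifications of \cite[Theorem 3.5]{polymath8b} and \cite[Theorem 3.10]{polymath8b} rather than re-deriving them. One small slip worth correcting: an admissible $k$-tuple has diameter $\gg k\log k$, not $\ll\log k$, so the conclusion is $H_m\ll k\log k\ll m\exp(c_0m)$, after which the polynomial factor is absorbed into the strict inequality exactly as you indicate.
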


We note that this Lemma implies \eqref{polybound} by taking $\rho(n)$ to be the indicator function of the primes, for which we can take $\theta=\frac{1}{2}+\frac{7}{300}$ by Polymath \cite[Theorem 2.4(i)]{polymath8a}.  Our improvement, Theorem \ref{mainthm}, will follow by taking the minorant constructed in the following result.

\begin{lem}\label{minorant}
Fix an $\eta$ in $(0,\frac{22}{3295})$.  Let $E(\eta)$ be the polyhedron in $\R^4$ given by
 \begin{align}\label{eq1.2}
 \begin{split}
\frac25+\eta>\a_1 &> \a_2 > \a_3>\a_4>\frac15-2\eta\\
\a_1 &+ \a_2 + \a_3 + 2\a_4 < 1\\
\a_1 &+ \a_2 < \frac 25 + \eta\\
\a_2 &+ \a_3 + \a_4 > \frac 35 - \eta
 \end{split} 
 \end{align}
and let $f(\bs \a) = \frac 1{\a_1\a_2\a_3\a_4(1-\a_1-\a_2-\a_3-
\a_4)}$.  Then, for all large $x$ there exists a function $\rho$ satisfying the hypotheses of the previous lemma with 
$$\theta=\frac12+\frac{7}{300}+\frac{17\eta}{120}$$
and 
\begin{equation}\label{c1def}
c_1=6\int_{E(\eta)} f(\bs \a)\,d\bs \a,
\end{equation}
where $d\bs \a = d\a_1 \ldots d\a_4$.
\end{lem}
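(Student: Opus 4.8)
The plan is to construct $\rho$ by Harman's sieve — iterated Buchstab identities inside the framework of Harman's ``fundamental theorem''. Work on $\mathcal A=\{n:x\le n\le 2x\}$, where $\mathbf 1_{\text{prime}}=S(\mathcal A,(2x)^{1/2})+O(x^{1/2})$. Sifting everything down to the fixed threshold $z_0=x^{1/5-2\eta}$ and repeatedly peeling prime factors, one writes $S(\mathcal A,(2x)^{1/2})$ as $S(\mathcal A,z_0)$ plus a sum of Buchstab pieces $\pm S(\mathcal A_{p_1\cdots p_j},p_j)$ — such a piece being supported on integers $n=p_1\cdots p_j m$ with the $p_i$ in prescribed dyadic ranges and $P^-(m)\ge p_j$; write $\alpha_i=\log p_i/\log x$. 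I would \emph{retain} a piece once it can be recast as a Type I, Type II or Type III sum to which the Polymath equidistribution estimates for $x^\delta$-smooth moduli apply at level $x^\theta$ with $\theta=\tfrac12+\tfrac7{300}+\tfrac{17\eta}{120}$; the analysis otherwise backs up and peels once more, and the pieces that can be neither treated nor usefully peeled again are \emph{discarded}. Then $\rho$ is the sum of the retained pieces with their Buchstab signs. Harman's bookkeeping arranges that each discarded piece is nonnegative and enters the expansion of $S(\mathcal A,(2x)^{1/2})$ with a $+$ sign, so $\rho\le\mathbf 1_{\text{prime}}$ pointwise — property (1) — while property (2) holds with $\xi=1/5-2\eta$ since $\rho$ is supported on $z_0$-rough integers.

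To get property (4) and to see $E(\eta)$ appear, one locates the discarded region. Because the peeled primes all have $\alpha_i>1/5-2\eta$, a further peel is available only while exponents exceed that value; one checks that the discarded pieces sit at Buchstab depth $4$, supported on $n=p_1p_2p_3p_4m$ with $\alpha_1,\dots,\alpha_4$ in the box $\bigl(\tfrac15-2\eta,\ \cdot\,\bigr)$ and subject to the linear inequalities expressing ``no admissible grouping exists'', and that on that region $m$ must be a single prime $p_5$ with $\alpha_5=1-\alpha_1-\alpha_2-\alpha_3-\alpha_4$ (a composite $m$ would give $\log m/\log x\ge 2\alpha_4$, impossible since $\alpha_5\approx\tfrac15$ there). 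Matching those inequalities against the constraints defining $E(\eta)$, and using that the density of a product of five primes with $\log p_i/\log x$ near a prescribed point is $f(\bs\alpha)$, the discarded mass equals $\bigl(6\int_{E(\eta)}f(\bs\alpha)\,d\bs\alpha+o(1)\bigr)\tfrac x{\log x}$, the $6$ being the multiplicity with which a given five-almost-prime feeds the discarded pieces. Since $\mathbf 1_{\text{prime}}-\rho$ equals the sum of those pieces, $\sum_{x\le n\le 2x}\rho(n)=(\pi(2x)-\pi(x))-(c_1+o(1))\tfrac x{\log x}=(1-c_1+o(1))\tfrac x{\log x}$ with $c_1$ as in \eqref{c1def}, which is property (4).

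Property (3) — the level of distribution of $\rho$ — is the crux. For a retained piece $n=p_1\cdots p_j m$ one groups the factors into a bilinear product $ab$; since every prime exceeds $x^{1/5-2\eta}$ the exponent of $a$ can be chosen in steps of size at most $\tfrac15-2\eta$, and the claim to verify is that for \emph{every} exponent vector outside $E(\eta)$ some such grouping puts $(a,b)$ inside the range where a Polymath Type I/II/III estimate to smooth moduli holds even when the modulus is pushed to $x^{1/2+7/300+17\eta/120}$. The size of the region one may discard — hence how far $\theta$ can be pushed — is controlled by $\eta$, and $\eta<\tfrac{22}{3295}$ is exactly the threshold under which all of the finitely many resulting range conditions (linear in the $\alpha_i$ and in the Polymath parameters $\varpi,\delta$) remain satisfied; it is this optimization that produces $\tfrac7{300}$, $\tfrac{17}{120}$ and $\tfrac{22}{3295}$. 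I expect this verification to be the main obstacle: translating each retained Buchstab piece, after its optimal grouping, into a statement covered by the Polymath Type I, Type II and Type III theorems and checking the ensuing system of inequalities. By comparison the Buchstab telescoping, the sign argument for the minorant property, and the prime-number-theorem evaluation of the discarded mass are routine.
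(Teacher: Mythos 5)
Your outline is in the right spirit—Buchstab decomposition inside Harman's framework, discard what cannot be estimated, and evaluate the discarded mass by the prime number theorem—but there is a genuine gap in the sign bookkeeping that the paper has to work hard to circumvent, and it is precisely where the constant $6$ actually comes from.

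The Buchstab decomposition produces, besides the depth-$4$ piece you describe, a depth-$3$ piece $\rho_4(n)=-\sum_{n=p_1p_2p_3p_4}1$ supported on $\a_1+\a_2>\tfrac35-\eta$, $\a_2<\tfrac15+\tfrac{4\eta}{3}$, $\a_4\ge\a_3$. The cofactor $n_4$ is already forced to be prime (because $\a_1+\a_2+3\a_3>1$), so no further peeling is available, and the piece enters with a \emph{minus} sign. Discarding a negative piece gives an upper bound for $\psi(n,(2x)^{1/2})$, not a minorant, so your assertion that ``each discarded piece is nonnegative and enters the expansion with a $+$ sign'' fails exactly here. The paper resolves this by a \emph{reversal of roles}: the \emph{largest} prime $p_1$ is replaced by a sifted variable $n_1$, i.e.\ $\psi(n_1,n_1^{1/2})$, and compared with $\psi(n_1,\l)$; the latter falls under the fundamental lemma (Lemma~\ref{fundlem}(6)), and since $n_1<\l^3$ the difference is supported on $n_1=p_5p_6$ with $\l\le p_5\le p_6$, turning $\rho_4$ into $\rho_0(n)$ plus a \emph{positive} count $\rho_6(n)$ of five-prime products. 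Without the role reversal your construction produces only an upper bound, or a minorant with $c_1$ of the wrong size.

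Consequently the $6$ in $c_1=6\int_{E(\eta)}f$ is not ``the multiplicity with which a given five-almost-prime feeds the discarded pieces'' in any single sense. It is $1+5$: the depth-$4$ discard $\rho_5$ contributes $\int_{E(\eta)}f$, and the role-reversed discard $\rho_6$ contributes $5\int_{E(\eta)}f$, where the $5$ arises from a permutation count (Lemma~\ref{permlem}: on a squarefree $n=p_1\cdots p_5$ in the common support, $\rho_5(n)=4$ while $\rho_6(n)=20$). Finally, your sketch also underplays the decisive role of the Type~III estimate (Lemma~\ref{type3} via Lemma~\ref{hblem}): it is what permits the constraint $\a_2<\tfrac15+\tfrac{4\eta}{3}$ defining $\rho_4$, and without it the loss in the range $\a_1+\a_2>\tfrac35-\eta$ would be far larger; the threshold $\eta<\tfrac{22}{3295}$ comes from this lemma alone, not from a uniform optimization over all retained pieces.
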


We note that, for small $\eta$, the value given by \eqref{c1def} is $O(\eta^4)$ and therefore the quantity 
$$\frac{2}{(1-c_1(\eta))\theta(\eta)}$$
is decreasing for sufficiently small $\eta$. This shows that an improvement to \eqref{polybound} is possible by an appropriate choice of $\eta$.  

We now give the definition of ``exponent of distribution to smooth moduli'' which is needed for Lemma \ref{sievelem} and which will be used repeatedly throughout this work.  It is a generalisation to arbitrary functions of \cite[Claim 2.3]{polymath8a}.

\begin{dfn}\label{smoothlevel}
An arithmetic function $f$ with support contained in $[x,2x]$ has exponent of distribution $\theta$ to smooth moduli if for every $\epsilon>0$ there exists a $\delta>0$ for which the following holds.  

For any $P$ which is a product of distinct primes smaller than $x^\delta$, any integer $a$ with $(a,P)=1$ and any $A>0$ we have 
$$\twosum{q\leq x^{\theta-\epsilon}}{q|P}\left|\sum_{n\equiv a\pmod q}f(n)-\frac{1}{\phi(q)}\sum_{(n,q)=1}f(n)\right|\ll x(\log x)^{-A}.$$
The implied constant may depend on $\epsilon$ and $A$.
\end{dfn}

The restriction $q|P$ implies that $q$ runs over squarefree, $x^\delta$-smooth moduli.  We observe that if a bounded number of functions $f_i$ have exponent $\theta$ then so does their sum.  This also holds for the sum of at most $\log^{O(1)}x$ functions provided we take care that for each $\epsilon>0$ the resulting $\delta$ are bounded away from $0$.  

For the remainder of the paper we define 
\begin{equation}\label{theta0def}
\theta_0(\eta)=\frac{1}{2}+\frac{7}{300}+\frac{17\eta}{120}.
\end{equation}
Lemma \ref{minorant} will be proven by using a Buchstab decomposition to write the indicator function of the primes as a sum of various functions.  We will then show that each summand either has exponent $\theta_0(\eta)$ or it is positive. The minorant will be constructed by removing the latter summands.

\section{Arithmetic Information}

The Harman sieve will be used to decompose various functions as a sum of convolutions.  It will be shown that the latter have exponent $\theta_0(\eta)$ by using the results of Polymath \cite[Theorem 2.8]{polymath8a} as well as a new result, Lemma \ref{lem1}, with which we begin. 

In the following lemma the symbols $\lessapprox$ and $\gtrapprox$ have  the same meaning as in \cite{polymath8a}, that is $X\lessapprox Y$ means that 
$$X\leq x^{o(1)}Y.$$

 \begin{lem}\label{lem1}
Fix $\varpi,\delta,\sigma>0$.  Let $\b(n)$ be a smooth coefficient sequence at scale $N$ (see \cite[Definition 2.5]{polymath8a}) with 
\begin{equation}\label{lem1cond1}
x^{1/2-\sigma}\lessapprox N\lessapprox x^{1/2}
\end{equation}
and let $\a(m)$ be a coefficient sequence at scale $M$ with $MN\asymp x$.  Let $P$ be a product of distinct primes less than $x^\d$ and $a$ an integer with $(a,P)=1$.  We then have 
$$\twosum{d\leq x^{1/2+2\varpi}}{d|P}|\Delta(\a\star\b;a(d))|\ll x\log^{-A},$$
for any $A>0$, provided that 
\begin{equation}\label{lem1cond2}
4\sigma+28\varpi+8\delta<1.
\end{equation}
The discrepancy $\Delta$ is defined in \cite[(1.1)]{polymath8a}.  
\end{lem}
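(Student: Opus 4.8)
The plan is to prove Lemma~\ref{lem1} by the dispersion method (Linnik, as developed by Bombieri--Friedlander--Iwaniec and by Zhang \cite{zhang}), using the smoothness of $\b$ to evaluate the complete sums that arise and the $x^\d$-smoothness of the moduli to push the level of distribution past $x^{1/2}$. First I would make the standard reductions: discard the $d\le x^\k$ for a small fixed $\k>0$ (trivially admissible), insert unimodular coefficients $c_d$ to remove the absolute values, and write
$$\Delta(\a\star\b;a(d))=\twosum{m}{(m,d)=1}\a(m)\,\Delta_d(m),\qquad \Delta_d(m)=\twosum{n}{n\equiv a\overline m\,(d)}\b(n)-\frac{1}{\phi(d)}\twosum{n}{(n,d)=1}\b(n),$$
where $\overline m$ denotes the inverse of $m$ modulo $d$; the terms with $(m,d)>1$ contribute nothing to either the congruence sum or its main term. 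Applying Cauchy--Schwarz in $m$ (a variable of length $M\asymp x/N$, so $M\lessapprox x^{1/2+\sigma}$) and using $\|\a\|_2^2\ll M(\log x)^{O(1)}$, it suffices to prove
$$\sum_{m}\Bigl|\;\twosum{d\le x^{1/2+2\varpi}}{d\mid P,\ (d,m)=1}c_d\,\Delta_d(m)\;\Bigr|^{2}\ \ll\ xN(\log x)^{-A}$$
for every $A$.

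Expanding the square produces a double sum over $d_1,d_2\le x^{1/2+2\varpi}$ of $\sum_m\Delta_{d_1}(m)\overline{\Delta_{d_2}(m)}$, which breaks into the familiar four pieces according to whether each factor contributes its congruence sum or its main term. In every piece I would use the smoothness of $\b$ to apply Poisson summation to the $n_1$- and $n_2$-sums, obtaining incomplete Kloosterman-type sums modulo $d_1$ and $d_2$; the $m$-sum, now a count of $m\asymp M$ in a fixed residue class modulo $[d_1,d_2]$ with a coprimality condition, I would also treat by Poisson. The zero frequencies produce main terms which, after the usual computation with the coprimality factors, cancel among the four pieces, so that one is left with an off-diagonal sum involving, schematically, a weight of size $N^2M/\bigl(d_1d_2[d_1,d_2]\bigr)$, dual variables $k_1,k_2$ ranging up to $\asymp d_i/N\asymp x^{2\varpi+\sigma}$ and $h$ up to $\asymp[d_1,d_2]/M$, and a two-dimensional exponential sum which, split by the Chinese Remainder Theorem, is a product of Kloosterman-type sums modulo $d_1$ and modulo $d_2$. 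Here the Weil bound alone falls short of the target by a fixed power $x^{c\varpi}$, so I would invoke the $x^\d$-smoothness of $d_1,d_2$: factor $d_i=r_is_i$ with $r_i,s_i$ in suitable ranges (possible since every $d_i$ is a product of primes below $x^\d$), carry out a $q$-van der Corput / Weyl-differencing step in one factor of each modulus, and bound the resulting complete algebraic exponential sums by the Weil bound together with the Deligne-type estimates assembled by Polymath \cite{polymath8a} (the same machinery that underlies their Theorem~2.8). Collecting all contributions and optimising the ranges of $r_i,s_i$, the condition $4\sigma+28\varpi+8\delta<1$ is precisely what makes every error term $\ll x(\log x)^{-A}$.

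The main obstacle is exactly this last stage: because $N$, $M$ and the moduli all cluster near $x^{1/2}$, the completions carry no free decay --- the dual variables truncate only at the scale $x^{O(\varpi+\sigma)}$, too large to finish trivially --- so one genuinely needs the $q$-van der Corput manoeuvre and sharp bounds for the resulting exponential sums, and one must keep careful track of the coprimality conditions, the size of $(d_1,d_2)$, and the admissible factorisations $d_i=r_is_i$ so that the numerology closes up to the stated inequality. A secondary but essential point is uniformity: the estimate must hold for \emph{every} $x^\d$-smooth squarefree modulus $d\mid P$, not merely a fixed well-factorable family, which is why the hypothesis is phrased through the smoothness of $P$ --- this guarantees that the factorisations needed for the differencing step are always available, and it is what lets Lemma~\ref{lem1} be fed directly into the Harman-sieve decomposition of the next section.
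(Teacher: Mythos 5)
Your outline is correct at the level of strategy, and it is the same family of methods the paper uses: dispersion (Cauchy--Schwarz over $m\asymp x/N$, expanding the square over $d_1,d_2$), Poisson summation in $n$ using the smoothness of $\beta$, cancellation of the main terms, and estimation of the resulting exponential sums by exploiting the $x^{\delta}$-smoothness (hence factorability) of the moduli via $q$-van der Corput together with Weil/Deligne bounds. The paper does not, however, carry out this machinery from scratch: it invokes Polymath's reduction in \cite[Section 5.3]{polymath8a} verbatim up to (5.28), verifies a single ancillary constraint ((5.15)), and then observes that it suffices to bound $\Upsilon_{\ell,r}$ as in \cite[(5.32)]{polymath8a}. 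The novelty resides entirely in a specific choice at that last step: the variable $r$ is treated \emph{trivially} and the first bound of \cite[Corollary 4.16]{polymath8a} is applied directly, giving (for $r\asymp R$, $q_i\asymp Q/q_0$) a term of size $q_0^{-2/3}N^{1/2}(RQ^2)^{1/6}x^{\delta/6}$ plus a diagonal contribution; a short explicit computation (using $RQ^2\lessapprox x^{1+4\varpi+\delta+3\varepsilon}N^{-1}$ and $N\gtrapprox x^{1/2-\sigma}$) then reduces the required estimate to $N\gtrapprox x^{1/4+7\varpi+2\delta+O(\varepsilon)}$, which is exactly the inequality $4\sigma+28\varpi+8\delta<1$.

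The material gap in your proposal is that this last step is precisely what you do not do. You write that ``collecting all contributions and optimising the ranges of $r_i,s_i$, the condition $4\sigma+28\varpi+8\delta<1$ is precisely what makes every error term $\ll x(\log x)^{-A}$'', but the inequality is never derived; as the entire content of the lemma beyond off-the-shelf machinery lies in this specific numerology, asserting it is not a proof. Moreover, the particular coefficients $4,28,8$ are a consequence of a \emph{particular} choice in the exponential-sum step (treat $r$ trivially, use only the first bound of Corollary 4.16), and your sketch leaves it open whether you would make that choice or a different one (e.g., exploiting $r$ as Polymath do in the full proof of their Theorem 5.8), which would lead to different numerology. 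So while the framework you describe is sound and essentially the one the paper relies on through citation, the proposal as written does not establish the stated constraint, and the selection of which bound to apply — the actual substance here — is missing.
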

 
 \begin{proof}
We can follow the reduction to exponential sums in \cite[Section 5.3]{polymath8a} until we reach (5.28), with $\Phi_\ell$ defined by (5.29).  The only detail to check prior to that point is the constraint (5.15), which is used by Polymath to control various diagonal contributions.  Using (5.12) and (5.13) we obtain 
$$RQ^2=R^{-1}(RQ)^2 \lessapprox N^{-1}x^{1+4\varpi+\delta+3\e}$$
so that, by \eqref{lem1cond1}, 
$$RQ^2\lessapprox x^{1/2+\sigma+4\varpi+\delta+3\e}.$$
Equation (5.15) is therefore satisfied provided we take $\e$ sufficiently small and 
$$2\sigma+8\varpi+2\delta<1.$$
This constraint is weaker than our assumption \eqref{lem1cond2}.  

Just as in the proof of  \cite[Theorem 5.8]{polymath8a} it now suffices to prove (5.31). We treat the variable $r$ trivially. Thus, as in \cite[(5.32)]{polymath8a},  we shall show that
 \begin{equation}\label{eq1.9}
\Upsilon_{\ell, r}(b_1, b_2; q_0) \lessapprox x^{-\e} Q^2N (q_0, \ell) q_0^{-2}
 \end{equation}
for $r \in [R, 2R]$, with $R$ satisfying the constraint \cite[(5.12)]{polymath8a}.

We get \eqref{eq1.9} by applying the first bound in \cite[Corollary 4.16]{polymath8a}. By simple considerations we can write
 \[e_r\left(\frac{ah}{nq_0q_1q_2}\right) e_{q_0q_1}
 \left(\frac{b_1h}{nrq_2}\right) = e_{rq_0q_1}
 \left(\frac{zh}{nq_2}\right)\]
with $(z, rq_0q_1) = 1$, $z = z(a, q_0, q_1, b_1, r, q_2)$.

We have 
 \begin{align}
&\sum_n C(n) \b(n) \overline{\b(n + \ell r)} \Phi_\ell (h, n, r, q_0, q_1, q_2)\label{eq1.10}\\
&\ll (q_0, \ell) \Bigg|\sum_{n\equiv t\ (\mod q_0)} \b(n) \overline{\b(n + \ell r)} e_{r q_0q_1} \left(\frac{zh}{nq_2}\right) e_{q_2} \left(\frac{b_2h}{(n+\ell r)r q_0q_1}\right)\Bigg|\notag
 \end{align}
for some $t\pmod {q_0}$ (see the remark about $C(n)$ preceding  \cite[(5.34)]{polymath8a}). We may therefore apply Corollary 4.16 with the following parameters:

\begin{enumerate} 
\item $\psi_N(n) = \b(n)\b(n+\ell r)$ (the required smoothness assumption is easy to verify in view of the smoothness of $\b(n)$). 

\item $d = q_0$, $d_1 = rq_0q_1$ and  $d_2 = q_2$.  These satisfy$(d_1, d_2) = 1$ since $(q_1,q_2)=1$ by assumption and $rq_0q_2$ is restricted to squarefree numbers in the definition of $\Upsilon$.  We therefore have $[d_1, d_2] = rq_0q_1q_2$.  In addition $[d_1,d_2]$ is $x^\delta$-smooth.

\item  $\d_i = d_i$ 

\item $\d_1' = \frac{d_1}{(q_0, d_1)} = rq_1$, $\d_2' = \frac{d_2}{(q_0, d_2)} = q_2$. 

\item  $c_1 = \frac{zh}{q_2}$, $(c_1, \d_1) = (h, q_1)$, $c_2 = \frac{b_2h}{rq_0q_1}$ and $(c_2, \d_2') = (h, q_2)$.
\end{enumerate}
Thus the left-hand side of \eqref{eq1.10} is
\begin{align*}
&\lessapprox  (q_0,\ell)\Bigl(q_0^{-1/2} N^{1/2} (rq_0q_1q_2)^{1/6} x^{\delta/6}+
q_0^{-1}\frac{(h,q_1q_2)}{rq_1q_2}N\Bigr)\\
&\lessapprox  (q_0,\ell)\Bigl(q_0^{-2/3} N^{1/2} (RQ^2)^{1/6} x^{\delta/6}+\frac{q_0(h,q_1q_2)N}{RQ^2}\Bigr)\\
\end{align*}
for $r\asymp R$, $q_i \asymp Q/q_0$. Summing over $h$, $q_1$, $q_2$, $1 \le |h| \le H \ll \frac{x^\e Q^2 R}{q_0M}$,
 \begin{align*}
\Upsilon_{\ell,r}(b_1, b_2;q_0) &\lessapprox&
\frac{x^{\e}(q_0,\ell) Q^4 R}{q_0^3M}\Bigl(q_0^{-2/3} N^{1/2} (RQ^2)^{1/6} x^{\delta/6}+\frac{q_0N}{RQ^2}\Bigr)\\
&\lessapprox& \frac{x^{\e}(q_0,\ell) Q^2}{q_0^2}\Bigl(q_0^{-5/3} N^{1/2} (RQ^2)^{7/6} x^{\delta/6}/M+N/M\Bigr).\\
\end{align*}
We now need 
$$\frac{N^{1/2} (RQ^2)^{7/6} x^{\delta/6}}{M}+\frac NM \lessapprox x^{-2\e} N.$$
We have $M\gtrapprox x^{1/2}$ so clearly $\frac{N}{M}\lessapprox Nx^{-2\e}$. Considering the first term, we must establish  
$$N^3 (RQ^2)^7 \lessapprox x^{6-\delta-12\e}.$$
We have 
$$RQ^2\ll R^{-1}x^{1+4\varpi}\lessapprox  x^{1+4\varpi+3\e+\delta}N^{-1}$$ 
so that 
$$N^3(RQ^2)^7\lessapprox N^{-4}x^{7+28\varpi+21\e+7\delta}.$$
We therefore require 
$$N^{-4}\lessapprox x^{-1-28\varpi-8\delta-31\e},$$
that is 
$$N\gtrapprox x^{1/4+7\varpi+2\delta+31\e/4}.$$
This follows from our assumptions \eqref{lem1cond1} and \eqref{lem1cond2} provided we choose a sufficiently small $\e$.  
 \end{proof}

We now derive the following result for ``Type II'' sums.  The reader should note that we use the definitions of Type I and Type II from Harman \cite{harmanbook} rather than those of Polymath \cite{polymath8a}.  

\begin{lem}\label{type2}
Suppose $0\leq \eta<\frac{2}{95}$.   Let $f$ be given by a convolution $f=\alpha*\beta$ where $\alpha$ and $\beta$ are coefficient sequences at scales $M$ and $N$.  Assume that $MN\asymp x$, 
$$x^{\frac25+\eta}\leq N\leq M\leq x^{\frac35-\eta}$$
and that $\a(m),\beta(n)$ satisfy the Siegel-Walfisz theorem. We conclude that $f$ has exponent of distribution $\theta_0(\eta)$ to smooth moduli.  
\end{lem}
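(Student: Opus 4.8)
The plan is to recognise the asserted bound as a ``Type II'' bilinear estimate of precisely the shape proved by Polymath, and to read it off from \cite[Theorem~2.8]{polymath8a} by choosing the parameters there correctly. This is the one input that needs nothing beyond the results of \cite{polymath8a}: Lemma~\ref{lem1} is reserved for the pieces of the Buchstab decomposition carrying a genuinely smooth factor, and is not needed here.

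First unwind Definition~\ref{smoothlevel}: it suffices to show that for every $\epsilon>0$ there is a $\delta>0$ such that, for every squarefree $x^\delta$-smooth $P$, every $a$ with $(a,P)=1$, and every $A>0$,
$$\twosum{q\leq x^{\theta_0(\eta)-\epsilon}}{q\mid P}|\Delta(\alpha\star\beta;a(q))|\ll x\log^{-A}.$$
The hypotheses fix the scale of $N$: from $MN\asymp x$ and $N\le M$ we get $N\lessapprox x^{1/2}$, while $M\le x^{3/5-\eta}$ forces $N\gtrapprox x^{2/5+\eta}$, so $N$ lies in $[x^{2/5+\eta},x^{1/2+o(1)}]$. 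Since $\alpha$ and $\beta$ satisfy Siegel--Walfisz and $q$ is restricted to squarefree $x^\delta$-smooth moduli dividing a fixed $P$, this is exactly an instance of Polymath's $\mathrm{Type}_{II}[\varpi,\delta,\sigma]$ hypothesis, in which the level of distribution is $x^{1/2+2\varpi}$ and $\sigma$ records how far $N$ may descend below $x^{1/2}$.

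I would then invoke \cite[Theorem~2.8]{polymath8a} with
$$\varpi=\frac{7}{600}+\frac{17\eta}{240}-\frac{\epsilon}{2},\qquad \sigma=\frac{1}{10}-\eta,$$
and $\delta>0$ small in terms of $\epsilon$. The first choice makes $x^{1/2+2\varpi}=x^{\theta_0(\eta)-\epsilon}$; the second makes $x^{1/2-\sigma}=x^{2/5+\eta}\lessapprox N$, so that the full range of $N$ is covered. There remains only to check that the triple $(\varpi,\delta,\sigma)$ lies in the admissible region of that theorem. In the limit $\epsilon,\delta\to0$ this admissibility condition becomes a single linear inequality in $\eta$; at $\eta=0$ the pair $(\varpi,\sigma)=(7/600,1/10)$ sits on its boundary, and tracking how the two sides move as $\eta$ grows shows that the inequality holds precisely for $\eta<\tfrac{2}{95}$, which is our hypothesis.

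The only real content is this numerical verification, together with the bookkeeping that matches our Definition~\ref{smoothlevel} against the MPZ-type formulation of \cite{polymath8a} (their discrepancy $\Delta$, their smoothness conventions, and the normalisation of the level as $x^{1/2+2\varpi}$). The point to watch is the tension between two effects: reaching the level $\theta_0(\eta)$ forces $\varpi$ up to essentially $7/600+17\eta/240$, which contracts the $\sigma$-range in which Polymath's Type II estimate is valid, whereas the range of $N$ we must handle only needs $\sigma$ up to $1/10-\eta$, which relaxes as $\eta$ grows. It is the balance between these that yields the threshold $2/95$, and confirming that this balance indeed tips at $2/95$ is where the work lies.
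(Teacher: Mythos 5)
Your plan is the same as the paper's: choose $\sigma = \tfrac{1}{10}-\eta$ and $\varpi$ just below $\tfrac{7}{600}+\tfrac{17\eta}{240}$, and read the conclusion off from \cite[Theorem~2.8]{polymath8a}. But the mechanism you give for the threshold $\eta<\tfrac{2}{95}$ is not right. Part~(iii) of that theorem imposes \emph{two} constraints, $\tfrac{160}{3}\varpi+16\delta+\tfrac{34}{9}\sigma<1$ and $64\varpi+18\delta+2\sigma<1$. The one that sits on its boundary at $\eta=0$ is the first, and under your substitutions the $\eta$-dependence in it cancels exactly: with $\delta\to 0$ its left side equals $1-\tfrac{80\epsilon}{3}$ for every $\eta$, so tracking it ``as $\eta$ grows'' produces no constraint at all. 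The threshold $\tfrac{2}{95}$ instead comes from the \emph{second} inequality, which is strictly satisfied at $\eta=0$ and degenerates as $\eta$ increases; the paper phrases this as requiring that the second bound on $\varpi$ be weaker than the first, i.e.\ $\tfrac{1}{80}+\tfrac{\eta}{32}>\tfrac{7}{600}+\tfrac{17\eta}{240}$, which is exactly $\eta<\tfrac{2}{95}$.

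You also never say which parts of Theorem~2.8 you apply. The paper uses both part~(iii) (Polymath's $\mathrm{Type}_I^{(iii)}$) and part~(iv) (Polymath's $\mathrm{Type}_{II}$) to cover the full range $x^{2/5+\eta}\le N\lessapprox x^{1/2}$; part~(iv)'s condition $68\varpi+14\delta<1$ must also be checked, and it yields the weaker requirement $\eta<\tfrac{62}{1445}$, so it does not constrain further. Neither omission is fatal — the parameter choices and the final threshold are right — but the write-up should distinguish the two constraints in part~(iii) and note that part~(iv) is also invoked.
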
 

\begin{proof}
Taking $\sigma=\frac{1}{10}-\eta$ in part (iii) of \cite[Theorem 2.8]{polymath8a} we see that the condition 
$$\frac{160}{3} \varpi + 16\delta + \frac{34}{9} \sigma < 1$$
is satisfied provided that
$$\varpi <\frac{7}{600}-\frac{3\delta}{10}+\frac{17\eta}{240}$$
and 
$$64\varpi + 18 \delta + 2\sigma < 1$$
is satisfied if 
$$\varpi <\frac1{80}-\frac{9\delta}{32}+\frac{\eta}{32}.$$
By taking a sufficiently small $\delta$, the second of these is weaker than the first provided that $\eta<\frac{2}{95}$.  The condition 
$$68\varpi+14\delta<1$$
from part (iv) is satisfied provided that 
$$\varpi<\frac{1}{68}-\frac{7\delta}{34}.$$
For $\delta$ sufficiently small this is weaker than the above provided that $\eta<\frac{62}{1445}$. 

We conclude that any convolution satisfying the hypotheses of the Lemma may be handled satisfactorily using either part (iii) or (iv).  The result follows, recalling that $\theta=\frac12+2\varpi$.
\end{proof}

Combining this with Lemma \ref{lem1} we obtain a result for ``Type I'' sums.

\begin{lem}\label{type1}
Suppose $0\leq \eta<\frac{2}{95}$.   Let $f$ be given by a convolution $f=\alpha*\psi$ where $\alpha$ and $\psi$ are coefficient sequences at scales $M$ and $N$.  Assume that $MN\asymp x$, $\alpha$ satisfies the Siegel-Walfisz theorem, 
$$N\geq x^{199/600+119\eta/240}$$
and that $\psi(n)$ is smooth. We conclude that $f$ has exponent of distribution $\theta_0(\eta)$ to smooth moduli.
\end{lem}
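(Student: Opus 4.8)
The plan is to break the range of the smooth scale $N$ (equivalently of $M\asymp x/N$) into three regimes that together cover all $N\ge x^{199/600+119\eta/240}$ and to treat each with a different one of the estimates at hand; the assumption $\eta<\tfrac{2}{95}$ is used in all three, the binding one being the appeal to Lemma \ref{type2}. \emph{Regime A: $x^{199/600+119\eta/240}\le N\le x^{1/2}$.} Here I would apply Lemma \ref{lem1} with $\b=\psi$ (which is smooth) and $\a$ the given coefficient sequence. Given $\e>0$ from Definition \ref{smoothlevel}, take
\[\varpi=\frac{7}{600}+\frac{17\eta}{240}-\frac{\e}{2},\qquad \sigma=\frac{101}{600}-\frac{119\eta}{240},\qquad \d=\frac{\e}{2}.\]
Then \eqref{lem1cond1} holds because $x^{1/2-\sigma}=x^{199/600+119\eta/240}\lessapprox N\le x^{1/2}$, while a short computation gives $4\sigma+28\varpi+8\d=1-10\e<1$, so \eqref{lem1cond2} holds, and $\sigma>0$ since $\eta<\tfrac{2}{95}$. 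Since $x^{1/2+2\varpi}=x^{\theta_0(\eta)-\e}$, Lemma \ref{lem1} gives $\sum_{d\le x^{\theta_0(\eta)-\e},\,d\mid P}|\Delta(f;a(d))|\ll x\log^{-A}$, which is exactly the conclusion required by Definition \ref{smoothlevel}.

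\emph{Regime B: $x^{1/2}<N\le x^{11/20}$.} Because $MN\asymp x$ and $\eta<\tfrac{1}{20}$, for all large $x$ both scales $M$ and $N$ lie in $[x^{2/5+\eta},x^{3/5-\eta}]$; moreover $\psi$, being smooth, satisfies the Siegel--Walfisz property, and $\a$ does so by hypothesis, so Lemma \ref{type2} applies (it needs $\eta<\tfrac{2}{95}$) and shows $f$ has exponent $\theta_0(\eta)$. \emph{Regime C: $N>x^{11/20}$.} Here $N$ is large enough for a bare completion argument. As $(a,q)=1$ for the moduli in question, $\Delta(f;a(q))=\sum_{(m,q)=1}\a(m)\,\Delta(\psi;a\overline{m}\,(q))$, and Poisson summation applied to the smooth sequence $\psi$ gives $|\Delta(\psi;b(q))|\ll_A \tfrac{\tau(q)}{\phi(q)}N(q/N)^A$ for $q\le N$. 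Summing over $q\le x^{\theta_0(\eta)-\e}$ and $m$, and using $\sum_m|\a(m)|\ll Mx^{o(1)}$ together with $MN\asymp x$, the left side of the estimate in Definition \ref{smoothlevel} is $\ll x^{1+o(1)}\bigl(x^{\theta_0(\eta)-\e}/N\bigr)^A$; since $\theta_0(\eta)<\tfrac{11}{20}$ whenever $\eta<\tfrac{2}{95}$, the bracket is a fixed negative power of $x$, and choosing $A$ large finishes this regime for any $\d>0$.

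The one genuinely delicate step is Regime A, where the parameters are forced: the exponent $\tfrac{199}{600}+\tfrac{119\eta}{240}$ in the hypothesis and the term $\tfrac{17\eta}{120}$ in $\theta_0(\eta)$ are calibrated precisely so that $4\sigma+28\varpi+8\d$ equals $1$ at $\varpi=\tfrac{7}{600}+\tfrac{17\eta}{240}$, $\sigma=\tfrac{1}{2}-\bigl(\tfrac{199}{600}+\tfrac{119\eta}{240}\bigr)$, $\d=0$, the only room being the $O(\e)$ saved by working at level $x^{\theta_0(\eta)-\e}$ instead of $x^{\theta_0(\eta)}$; this is why this lemma, and not a sharper one, is what comes out. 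Regimes B and C have comfortable margins, every $N\ge x^{199/600+119\eta/240}$ falls in exactly one regime, and each argument is stable under $x^{o(1)}$ movements of the endpoints, so no separate treatment of boundary values is needed.
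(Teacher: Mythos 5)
Your argument is correct and delivers the lemma, but it uses a genuinely different partition of the range of $N$ than the paper does, while relying on the same three underlying inputs (Lemma \ref{lem1}, Lemma \ref{type2}, and a trivial completion argument for large smooth scale). The paper splits at the natural Type~I/II/0 boundaries $N = x^{2/5+\eta}$ and $N = x^{3/5-\eta}$: below $x^{2/5+\eta}$ it invokes Lemma \ref{lem1}, in $[x^{2/5+\eta}, x^{3/5-\eta}]$ it invokes Lemma \ref{type2}, and above $x^{3/5-\eta}$ it uses the Type~0 completion (citing the end of Section~3 of Polymath), the last case requiring only $\theta_0(\eta)<\tfrac35-\eta$, i.e.\ $\eta<\tfrac{46}{685}$. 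You instead cut at $x^{1/2}$ and $x^{11/20}$: you push Lemma \ref{lem1} to its full allowed upper endpoint $N\lessapprox x^{1/2}$, relabel $M\leftrightarrow N$ so as to feed the middle range into Lemma \ref{type2} (legitimate since a smooth sequence satisfies Siegel--Walfisz), and begin the completion argument already at $N>x^{11/20}$, which works because $\theta_0(\eta)<\tfrac{11}{20}$ for all $\eta<\tfrac{16}{85}$. Both partitions cover $N\ge x^{199/600+119\eta/240}$; the constants $\tfrac12$ and $\tfrac{11}{20}$ are arbitrary choices within the permissible overlap, whereas the paper's choices hug the edges of the Type~II window. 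One merit of your write-up is that it makes fully explicit the parameter calibration $(\varpi,\sigma,\delta)$ in the Lemma~\ref{lem1} step, including the identity $4\sigma+28\varpi+8\delta=1$ at the boundary, which the paper only gestures at by observing $\tfrac14+7\varpi+2\delta\approx\tfrac{199}{600}+\tfrac{119\eta}{240}$. A small bookkeeping remark on your Regime~C: after summing over $q\le Q=x^{\theta_0(\eta)-\e}$, the exponent of $x$ should be $1+\theta_0(\eta)+o(1)$ rather than $1+o(1)$, because the $q$-sum contributes an extra factor $Q$; this does not affect the conclusion since the factor $(Q/N)^A$ is a fixed negative power of $x$ and $A$ is at your disposal.
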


\begin{proof}
We consider $3$ cases, depending on the size of $N$.

\begin{enumerate}
\item We can give a very trivial treatment for $N\geq x^{\frac35-\eta}$, see the discussion of Type 0 sums near the end of \cite[Section 3]{polymath8a}.  Specifically, if 
$$\frac{1}{2}+\frac{7}{300}+\frac{17\eta}{120}<\frac{3}{5}-\eta,$$
that is if $\eta<\frac{46}{685}$, then 
$$N\geq x^{\epsilon}q$$
for all $q\leq x^{1/2+7/300+17\eta/120-\epsilon}$ so the sum over $N$ may be evaluated asymptotically.

\item If $x^{2/5+\eta}\leq N\leq x^{3/5-\eta}$ then the result follows by Lemma \ref{type2}.

\item Finally, if $N\leq x^{2/5+\eta}$ we appeal to Lemma \ref{lem1}.  This requires that 
$$N\geq x^{1/4+7\varpi+2\delta},$$
which will be satisfied provided we choose a sufficiently small $\delta$ and 
$$N\geq x^{199/600+119\eta/240}.$$
\end{enumerate}
\end{proof}

Finally, using part (v) of \cite[Theorem 2.8]{polymath8a} we obtain the following estimate for a ``Type III'' sum.

\begin{lem}\label{type3}
Suppose $0\leq \eta<\frac{22}{3295}$.  Let $f=\a*\psi_1*\psi_2*\psi_3$ for coefficient sequences $\alpha,\psi_1,\psi_2,\psi_3$ at scales $M,N_1,N_2,N_3$.  Suppose that the $\psi_i$ are smooth and that the scales satisfy 
$$MN_1N_2N_3\asymp x,$$
$$N_1N_2,N_1N_3,N_2N_3\geq x^{3/5-\eta}$$
and
$$x^{1/5-2\eta}\leq N_1,N_2,N_3\leq x^{2/5+\eta}.$$
We conclude that $f$ has exponent of distribution $\theta_0(\eta)$ to smooth moduli.  
\end{lem}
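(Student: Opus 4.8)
The plan is to read this off from part (v) of \cite[Theorem 2.8]{polymath8a}, the Type III estimate, since our hypotheses have been arranged to fit the definition of $\mathrm{Type}_{III}[\varpi,\delta,\sigma]$ used there. As we are aiming for the exponent $\theta_0(\eta)=\frac12+2\varpi$, I would set $\varpi=\frac{7}{600}+\frac{17\eta}{240}$, and then the right choice of the remaining parameter is $\sigma=\frac{1}{10}-\eta$: with this value the size conditions in our statement are, term by term, exactly the ones defining a Type III convolution, since
$$N_i\le x^{2/5+\eta}=x^{1/2-\sigma},\qquad N_i\ge x^{1/5-2\eta}=x^{2\sigma},\qquad N_iN_j\ge x^{3/5-\eta}=x^{1/2+\sigma}.$$
(The middle bound $N_i\ge x^{1/5-2\eta}$ is in fact automatic, being forced by the other two.) Multiplying the three pairwise lower bounds gives $M\le x^{1/10+3\eta/2}$, which is small enough that the fact that $\alpha$ is only assumed to be a coefficient sequence — with no Siegel--Walfisz property — causes no trouble: the main term is produced entirely by the three smooth factors, over which the dispersion argument is run, and the sum over $m$ is then taken trivially.

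It remains only to verify the arithmetic constraint that \cite[Theorem 2.8(v)]{polymath8a} imposes on $(\varpi,\delta,\sigma)$, which in the present normalisation has the shape
$$\sigma>\frac{1}{18}+\frac{28}{9}\varpi+O(\delta)$$
(the side conditions — smallness of $\varpi,\delta,\sigma$ — being trivially met). Substituting $\sigma=\frac{1}{10}-\eta$ and $\varpi=\frac{7}{600}+\frac{17\eta}{240}$ and clearing denominators, this inequality simplifies to
$$\eta<\frac{22}{3295}-O(\delta).$$
Since Definition \ref{smoothlevel} lets us take $\delta$ as small as we please once $\epsilon>0$ has been fixed, every $\eta<\frac{22}{3295}$ is admissible; \cite[Theorem 2.8(v)]{polymath8a} then delivers the discrepancy bound of Definition \ref{smoothlevel} for $f$ with $\theta=\frac12+2\varpi=\theta_0(\eta)$, which is the assertion of the Lemma.

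The real content is all packaged inside \cite[Theorem 2.8]{polymath8a}; what is left is bookkeeping, and the step needing the most care is the comparison of conventions. Polymath's Type I/II/III classification is not Harman's (as noted just before Lemma \ref{type2}), so one must check that the ranges displayed above really do coincide with those in Polymath's definition of $\mathrm{Type}_{III}[\varpi,\delta,\sigma]$, that no auxiliary restriction on $M$ or on an individual $N_i$ has been overlooked, and — if Polymath imposes an ordering $N_1\ge N_2\ge N_3$ — that we may relabel, our hypotheses being symmetric in the $N_i$. Beyond that, the only thing to get right is the elementary computation that produces the constant $\frac{22}{3295}$ from the displayed linear constraint, and this is precisely what fixes the admissible range of $\eta$ in the statement.
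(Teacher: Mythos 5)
Your proposal is correct and follows essentially the same route as the paper: apply \cite[Theorem~2.8(v)]{polymath8a} with $\sigma=\tfrac{1}{10}-\eta$ and $\varpi=\tfrac{7}{600}+\tfrac{17\eta}{240}$, check that the hypotheses of the lemma match the Type III conditions, and reduce the constraint $\sigma>\tfrac{1}{18}+\tfrac{28}{9}\varpi+\tfrac{2\delta}{9}$ to $\eta<\tfrac{22}{3295}$ after taking $\delta$ small. The paper's proof is terser (it omits the explicit matching of the $N_i$ ranges and the aside on the redundancy of the lower bound on $N_i$), but the content is identical.
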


\begin{proof}
We apply part (v) of \cite[Theorem 2.8]{polymath8a} with $\sigma=\frac{1}{10}-\eta$. 

In order for the lower bound 
$\s > \frac 1{18} + \frac{28}9\, \varpi+\frac{2\delta}{9}$ 
to hold we require a sufficiently small $\delta$ and 
$$\frac{1}{10}-\eta> \frac 1{18} + \frac{28}9\left(\frac{7}{600}+\frac{17\eta}{240}\right)$$
that is $\eta<\frac{22}{3295}$.  The condition $\varpi<\frac{1}{12}$ is clearly satisfied so the result follows.  
\end{proof}

When applying the results of this section to sums arising from the sieve it will be necessary to apply a suitable smooth partition of unity to decompose the summands into appropriate coefficient sequences.  The details of this procedure are given by Polymath in \cite[Section 3]{polymath8a} so we do not repeat them here.  It is also necessary to verify that the coefficient sequences satisfy the Siegel-Walfisz theorem but this can be established in each case by a suitable appeal to the theory of Dirichlet $L$-functions.

\section{The Sieve: Proof of Lemma \ref{minorant}}

For any natural number $n$ and any $y>0$ we define 
$$\psi(n,y)=\begin{cases}
1 & p|n\rightarrow p\geq y\\
0 & \text{otherwise}.\\
\end{cases}$$  
The indicator function of the primes in $[x,2x]$ is then given by\\ 
$\psi(n,(2x)^{1/2})$. 

If $y_1<y_2$ then the Buchstab identity is 
$$\psi(n,y_2)=\psi(n,y_1)-\twosum{n=p_1n_2}{y_1\leq p<y_2}\psi(n_2,p_1).$$ 
Letting $\l =(2x)^{1/5-2\eta}$ and writing $p_j=(2x)^{\a_j}$ this gives 
$$\psi(n,(2x)^{1/2})=\psi(n,\l)-\twosum{n=p_1n_2}{1/5-2\eta\leq \a_1< 1/2}\psi(n_2,p_1).$$
We assume for the remainder of this work that $\eta<\frac{22}{3295}$.  The full strength of this hypothesis will only be required in Lemma \ref{hblem} but there are numerous weaker requirements on $\eta$.  We will show, in Lemma \ref{fundlem}, that the function $\psi(n,\l)$ has exponent of distribution $\theta_0(\eta)$ to smooth moduli.  By Lemma \ref{type2} this also holds for the function 
$$-\twosum{n=p_1n_2}{2/5+\eta\leq \a_1< 1/2}\psi(n_2,p_1).$$
We write $\rho_0(n)$ for an arbitrary  function with exponent of distribution $\theta_0(\eta)$ to smooth moduli and we allow the meaning of $\rho_0(n)$ to differ at each occurrence.  With this convention we have 
$$\psi(n,(2x)^{1/2})=\rho_0(n)-\twosum{n=p_1n_2}{1/5-2\eta\leq \a_1< 2/5+\eta}\psi(n_2,p_1).$$
Applying Buchstab's identity again we obtain 
\begin{eqnarray*}
\psi(n,(2x)^{1/2})&=&\rho_0(n)-\twosum{n=p_1n_2}{1/5-2\eta\leq \a_1< 2/5+\eta}\psi(n_2,\l)\\
&&+\twosum{n=p_1p_2n_3}{1/5-2\eta\leq \a_2<\a_1< 2/5+\eta}\psi(n_3,p_2).\\
\end{eqnarray*}
The first unhandled term in this will be dealt with in Lemma \ref{fundlem}.  We can get a satisfactory treatment of that part of the second term with $\a_1+\a_2\in [2/5+\eta,3/5-\eta]$ by means of Lemma \ref{type2}.  In Lemma \ref{hblem} we will show that the part of the final term with $\a_1+\a_2\geq \frac35-\eta$ and $\a_2\geq \frac15+\frac{4\eta}{3}$ has exponent $\theta_0(\eta)$, for $\eta<\frac{22}{3295}$.  We therefore have 
$$\psi(n,(2x)^{1/2})=\rho_0(n)+\rho_1(n)+\rho_2(n),$$
where
$$\rho_1(n)=\threesum{n=p_1p_2n_3}{1/5-2\eta\leq \a_2<\a_1< 2/5+\eta}{\a_1+\a_2<2/5+\eta}\psi(n_3,p_2)$$
and
$$\rho_2(n)=\threesum{n=p_1p_2n_3}{1/5-2\eta\leq \a_2<\a_1< 2/5+\eta}{\a_1+\a_2>3/5-\eta,\a_2<1/5+4\eta/3}\psi(n_3,p_2).$$
We apply Buchstab's identity twice to $\rho_1$ to obtain 
\begin{eqnarray*}
\rho_1(n)&=&\threesum{n=p_1p_2n_3}{1/5-2\eta\leq \a_2<\a_1< 2/5+\eta}{\a_1+\a_2<2/5+\eta}\psi(n_3,\l)\\
&&-\threesum{n=p_1p_2p_3n_4}{1/5-2\eta\leq \a_3<\a_2<\a_1< 2/5+\eta}{\a_1+\a_2<2/5+\eta}\psi(n_4,\l)\\
&&+\threesum{n=p_1p_2p_3p_4n_5}{1/5-2\eta\leq \a_4<\a_3<\a_2<\a_1< 2/5+\eta}{\a_1+\a_2<2/5+\eta}\psi(n_5,p_4).
\end{eqnarray*}
By Lemma \ref{fundlem} the first term in this has exponent $\theta_0(\eta)$.  This also applies to the second term provided that $\a_3<\zeta$ ($\zeta$ will be defined in Lemma \ref{fundlem}).  We know that 
$$\a_3\leq \frac{\a_1+\a_2}{2}<\frac15+\frac{\eta}{2}$$
and therefore $\a_3<\zeta$ provided that 
$$\frac{1}{5}+\frac{\eta}{2}<\frac{161}{600}-\frac{359\eta}{240},$$ 
that is $\eta<\frac{82}{2395}$.  We conclude that 
$$\rho_1(n)=\rho_0(n)+\threesum{n=p_1p_2p_3p_4n_5}{1/5-2\eta\leq \a_4<\a_3<\a_2<\a_1< 2/5+\eta}{\a_1+\a_2<2/5+\eta}\psi(n_5,p_4)=\rho_0(n)+\rho_3(n),$$ 
say.  We will deal with $\rho_3(n)$ in Lemma \ref{rho3}.

We can only apply Buchstab's identity once to the function $\rho_2(n)$. We obtain 
\begin{eqnarray*}
\rho_2(n)&=&\threesum{n=p_1p_2n_3}{1/5-2\eta\leq \a_2<\a_1< 2/5+\eta}{\a_1+\a_2>3/5-\eta,\a_2<1/5+4\eta/3}\psi(n_3,\l)\\
&&-\threesum{n=p_1p_2p_3n_4}{1/5-2\eta\leq \a_3<\a_2<\a_1< 2/5+\eta}{\a_1+\a_2>3/5-\eta,\a_2<1/5+4\eta/3}\psi(n_4,p_3).
\end{eqnarray*}
We may apply Lemma \ref{fundlem} to the first term of this provided that $\a_2<\zeta$, which is satisfied if 
$$\frac{1}{5}+\frac{4\eta}{3}<\frac{161}{600}-\frac{359\eta}{240}\longleftrightarrow \eta< \frac{82}{3395}.$$
We therefore have
$$\rho_2(n)=\rho_0(n)-\threesum{n=p_1p_2p_3n_4}{1/5-2\eta\leq \a_3<\a_2<\a_1< 2/5+\eta}{\a_1+\a_2>3/5-\eta,\a_2<1/5+4\eta/3}\psi(n_4,p_3)=\rho_0(n)+\rho_4(n),$$
say.  The function $\rho_4(n)$ will be treated in Lemma \ref{rho4}.  

We now record the various results coming from the Harman sieve which are needed in the above.

\begin{lem}\label{fundlem}
For $0\leq \eta<\frac{1}{100}$ let 
$$\zeta=\frac{161}{600}-\frac{359\eta}{240}.$$
We conclude that all of the following functions have exponent of distribution $\theta_0(\eta)$ to smooth moduli:
\begin{enumerate}
\item $\psi(n,\l)$

\item 
$$\twosum{n=p_1n_2}{1/5-2\eta\leq \a_1< 2/5+\eta}\psi(n_2,\l)$$

\item 
$$\threesum{n=p_1p_2n_3}{1/5-2\eta\leq \a_2<\a_1<2/5+\eta}{\a_1+\a_2<2/5+\eta}\psi(n_3,\l)$$
  
\item 
$$\threesum{n=p_1p_2n_3}{1/5-2\eta\leq \a_2<\a_1<2/5+\eta}{\a_1+\a_2>3/5-\eta,\a_2<\zeta}\psi(n_3,\l)$$

\item 
$$\threesum{n=p_1p_2p_3n_4}{1/5-2\eta<\a_3< \a_2 < \a_1<2/5+\eta}{\a_1+\a_2<\frac25+\eta,\a_3<\zeta}\psi(n_4,\l).$$

\item 
$$\threesum{n=n_1p_2p_3p_4}{1/5-2\eta\leq \a_3<\a_2< 2/5+\eta}{\a_3+\a_4<2/5+\eta,\a_2<\zeta,\a_4>\a_3}\psi(n_1,\l)$$
\end{enumerate}
\end{lem}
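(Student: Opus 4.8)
The plan is to prove (1)--(6) together, by running Harman's sieve (as in \cite[Chapter 3]{harmanbook}) with the three pieces of arithmetic input assembled above: Lemma \ref{lem1} (equivalently Lemma \ref{type1}) for Type~I convolutions, Lemma \ref{type2} for Type~II convolutions, and Lemma \ref{type3} for Type~III convolutions. Each of the six functions is a sum, over a product $p_1\cdots p_{k-1}n_k$ with $p_j=(2x)^{\a_j}$ prime and $n_k$ sifted of size $(2x)^{1-\a_1-\cdots-\a_{k-1}}$, of the weight $\psi(n_k,\l)$ subject to the stated linear restrictions on the $\a_j$. First I would pass to coefficient sequences exactly as in \cite[Section 3]{polymath8a}: insert a smooth partition of unity into every variable, resolve the large prime variables by a Heath--Brown identity so that the small factors become smooth, and open $\psi(n_k,\l)$ itself by further Buchstab iterations. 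Since $\l=(2x)^{1/5-2\eta}$ and $\eta<\tfrac1{100}$, a sifted integer $\le 2x$ has at most five prime factors, so this process terminates after boundedly many steps and yields $\lessapprox 1$ convolutions $\a_1\star\cdots\star\a_r$ with $N_1\cdots N_r\asymp x$ in which every factor coming from a $\psi$ is smooth in the sense of \cite[Definition 2.5]{polymath8a}. By the additivity of the exponent of distribution noted after Definition \ref{smoothlevel} it then suffices to show each such convolution has exponent $\theta_0(\eta)$; along the way one checks, by the $L$-function argument indicated at the end of Section~2, that every factor satisfies Siegel--Walfisz, and one keeps the number of pieces at $\log^{O(1)}x$ with the associated $\delta$'s bounded away from $0$.

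For an individual convolution the argument is a finite case analysis on the exponents $\b_i=\log N_i/\log x$. If some partial product of the $N_i$ lies in $[x^{2/5+\eta},x^{3/5-\eta}]$ it is of Type~II and Lemma \ref{type2} applies; if some smooth partial product has exponent $\ge\tfrac35-\eta>\theta_0(\eta)$ it is of Type~0 and disposed of trivially, as in case~(1) of the proof of Lemma \ref{type1}; if the convolution is $\a$ times three smooth factors of exponents in $[\tfrac15-2\eta,\tfrac25+\eta]$ with pairwise products exceeding $x^{3/5-\eta}$ it is of Type~III and Lemma \ref{type3} applies; and if a smooth factor has exponent in the window $[\tfrac{199}{600}+\tfrac{119\eta}{240},\tfrac12]$ it is of Type~I and Lemma \ref{lem1} applies, taken with $\sigma$ as large as the lower end of that window forces, so that the constraint $N\gtrapprox x^{1/4+7\varpi+2\delta}$ reads $N\gtrapprox x^{199/600+119\eta/240+2\delta}$ once $\theta_0(\eta)=\tfrac12+2\varpi$. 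The restrictions built into (1)--(6) are precisely what make one of these situations unavoidable. Two features deserve emphasis. First, since every factor ultimately traces back to primes $\ge(2x)^{1/5-2\eta}$ and the upper constraints ($\a_1<\tfrac25+\eta$, $\a_1+\a_2<\tfrac25+\eta$, $\a_2<\z$, $\a_3<\z$, $\a_3+\a_4<\tfrac25+\eta$) hold the surviving groups below $x^{2/5+\eta}$, the Type~III convolutions that arise all lie in the range $[x^{1/5-2\eta},x^{2/5+\eta}]$; and it is exactly there that \cite[Theorem 2.8(v)]{polymath8a}, run with $\sigma=\tfrac1{10}-\eta$ as in Lemma \ref{type3}, permits $\varpi$ up to $\tfrac7{600}+\tfrac{17\eta}{240}$, which is what buys the improved level $\theta_0(\eta)=\tfrac12+2\varpi$ and hence Theorem \ref{mainthm}. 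Second, $\z=\tfrac{161}{600}-\tfrac{359\eta}{240}$ is precisely $\tfrac35-\eta$ minus the Type~I exponent $\tfrac{199}{600}+\tfrac{119\eta}{240}$, so a hypothesis of the shape ``$\a_1+\a_2<\tfrac25+\eta$ and $\a_3<\z$'' (items (4)--(6)) is exactly the assertion that the sifted remainder has exponent $1-\a_1-\a_2-\a_3>\tfrac{199}{600}+\tfrac{119\eta}{240}$, putting it in the Type~I window once it is opened to a smooth factor.

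The main obstacle is the combinatorial bookkeeping in this case analysis: one must verify that, subject to the restrictions of each of (1)--(6) and to the five-step depth bound, no convolution is ever stranded outside the union of the Type~0, I, II and III regions, and that the unavoidable boundary configurations — a sifted remainder of exponent near $\z$, a product of two near-extreme primes, the cases $\Omega(n_k)\in\{1,2\}$ where no Buchstab step remains — are each absorbed either by a Type~0 estimate or by an already-proved item of the present lemma. This is why (1)--(6) are proved in the stated order, the unconstrained function $\psi(n,\l)$ first. The verification is a finite family of linear-inequality checks in the $\a_j$, of the same character as, but somewhat finer than, the Buchstab computations already carried out in Section~3 for $\psi(n,(2x)^{1/2})$, and it is what pins the hypothesis $\eta<\tfrac1{100}$. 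Once it is done, quoting Lemmas \ref{lem1}, \ref{type2} and \ref{type3} term by term and summing over the $\lessapprox 1$ pieces completes the proof.
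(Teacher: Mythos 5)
Your proposal and the paper's proof diverge at the start: the paper does \emph{not} open the sums by hand via Heath--Brown and a term-by-term case analysis. Instead it cites the Harman sieve in the pre-packaged form of Baker and Weingartner \cite{bakerw}, Lemma~14. That lemma's two hypotheses, their (4.1) and (4.2), are verified once and for all with the Type~I and Type~II estimates (Lemmas~\ref{type1} and~\ref{type2}) applied to the discrepancy kernel $w(n)=\mathbf{1}_{n\equiv a\ (\mathrm{mod}\ q)}-\tfrac1{\phi(q)}\mathbf{1}_{(n,q)=1}$, with the parameters $\alpha=\tfrac25+\eta$, $\beta=\tfrac15-2\eta$, $M=x^{401/600-119\eta/240}$ forcing $R<x^{2/5+\eta}$, $S<MX^{-\alpha}=x^{\zeta}$. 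After that, proving each of (1)--(6) is merely a matter of specifying suitable $R,S,u_r,v_s$ within those bounds, which the authors do in half a sentence per item; they do not re-derive the underlying ``no convolution is stranded'' bookkeeping, because that is exactly what the quoted sieve lemma internalises. Your plan, by contrast, proposes to redo that bookkeeping from scratch and then leaves it as ``a finite family of linear-inequality checks'' -- this is the entire content of the lemma, so stating it as an obstacle without carrying it out is a real gap, not a detail.

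A second, substantive error: you assert that the Type~III input of Lemma~\ref{type3} is what ``buys the improved level $\theta_0(\eta)$'' and is therefore used inside the case analysis for Lemma~\ref{fundlem}. It is not. The value $\varpi=\tfrac7{600}+\tfrac{17\eta}{240}$, hence $\theta_0(\eta)=\tfrac12+2\varpi$, comes from the Type~II estimate (Polymath Theorem~2.8(iii) with $\sigma=\tfrac1{10}-\eta$, as in the proof of Lemma~\ref{type2}); and the proof of Lemma~\ref{fundlem} invokes only Type~I and Type~II information. The Type~III estimate first appears in Lemma~\ref{hblem}, where it is used to treat the piece with $\a_1+\a_2>\tfrac35-\eta$ and $\a_2\ge\tfrac15+\tfrac{4\eta}3$, precisely the configuration the present lemma is careful to exclude (items (4)--(6) all carry a cap $\a_2<\zeta$ or $\a_3<\zeta$ exactly so that no Type~III situation has to be faced here). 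Your description of the constraints in (4)--(6) is also off: (4) has $\a_1+\a_2>\tfrac35-\eta$, $\a_2<\zeta$, and (6) has $\a_3+\a_4<\tfrac25+\eta$, $\a_2<\zeta$; it is not uniformly ``$\a_1+\a_2<\tfrac25+\eta$ and $\a_3<\zeta$''. Your observation that $\zeta=\tfrac35-\eta-\bigl(\tfrac{199}{600}+\tfrac{119\eta}{240}\bigr)$ is correct and does explain why $\zeta$ is the right threshold, but it belongs to the verification of (4.1), not to a Type~III step.
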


\begin{proof}
We use the Harman sieve in the form given by Baker and Weingartner \cite[Lemma 14]{bakerw}.  In the notation of that lemma we let 
$\a=\frac25+\eta$
and $\b=\frac15-2\eta$, so that $\a+\b=\frac35-\eta$. We take 
$$M=x^{401/600-119\eta/240}.$$ 
We must therefore take 
$$R<x^{2/5+\eta}\text{ and }S<MX^{-\a}=x^{161/600-359\eta/240}.$$
We write $S=x^\zeta$ so that 
$$\zeta=\frac{161}{600}-\frac{359\eta}{240},$$
as in the statement of the lemma.  

We apply the lemma to functions of the form
 $$w(n)=\mathbf{1}_{n\equiv a\pmod q}-\frac{1}{\phi(q)}\mathbf{1}_{(n,q)=1}.$$
Lemma \ref{type1} shows that the condition \cite[(4.1)]{bakerw} is satisfied for almost all relevant $q$.  Specifically we can take $y=\frac{x}{q}\log^{-A}x$, for any $A>0$, and the cardinality of the exceptional set of $q$ is $O_{A,B}(x\log^{-B}x)$, for any $B>0$. By using Lemma \ref{type2} we see that an identical conclusion holds for the condition \cite[(4.2)]{bakerw}.  

Observe that $\l=x^\b$ and therefore, in order to complete the proof, it only remains to show that suitable choices of $R,S,u_r,v_s$ can be found in each instance.

\begin{enumerate}
\item We take $R=S=1$ and $u_1=v_1=1$.

\item We take $R=x^{2/5+\eta}$, $S=1$, $v_1=1$ and $u_r$ the indicator function of the primes in $[x^{1/5-2\eta},x^{2/5+\eta}]$. 

\item Since $\a_1+\a_2<\frac25+\eta$ and $\a_2<\a_1$ we have $\a_2<\frac15+\frac{\eta}{2}$.  This is smaller than $\zeta$ provided that $\eta<\frac{82}{2395}$. The result therefore follows if we take $R=x^{2/5+\eta},S=x^{1/5+\eta/2}$ and $u_r,v_s$ suitable indicator functions of primes.  A finer than dyadic decomposition can be used to remove the constraint on $\a_1+\a_2$.  

\item This can be handled in an analogous way to the previous part since the constraint $\a_2<\zeta$ is assumed to hold.

\item We now take $R=x^{2/5+\eta},S=x^\zeta$, $u_r$ is the indicator function of certain products of two primes and $v_s$ the indicator of certain primes. 

\item This is very similar to the previous part except for a different labelling of the variables.  We take $R=x^{2/5+\eta}$, $S=x^\zeta$, $u_r$ the indicator of the products $p_3p_4$ and $v_s$ the indicator of the primes $p_2$.  
\end{enumerate}

The result therefore follows if $\eta$ satisfies the above constraint $\eta<\frac{82}{2395}$ as well as the requirements of Lemmas \ref{type1} and \ref{type2}.  Certainly any $\eta<\frac{1}{100}$ will be satisfactory.  
\end{proof}  

In order to prove Lemma \ref{hblem}, below, we need some purely combinatorial results.  

 \begin{lem}\label{lem2}
Let $0 < \eta < \frac{82}{5395}$. Let $\g_1 \ge \cdots \ge \g_t > 0$, $\g_1 + \cdots + \g_t = 1$ and
 \begin{enumerate}
\item[(a)] $\g_1 <\frac{199}{600}+\frac{119\eta}{240}$;

\item[(b)] no sum $\sum\limits_{j\in \mc S} \g_j$ is in $\left[\frac 25 + \eta, \, \frac 35 - \eta\right]$.

\item[(c)] {\rm either} $\g_3 < \frac 15 - 2\eta$ {\rm or} $\g_2 + \g_3 < \frac 25 + \eta$. 
 \end{enumerate}
Then $\g_5 \ge \frac 15 - 2\eta$ and
 $$\g_1 + \g_2 + \g_6 + \cdots + \g_t <
 \frac 25 + \eta.$$
 \end{lem}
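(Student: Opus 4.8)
The plan is to write $\a_0=\tfrac25+\eta$ and $\b_0=\tfrac35-\eta$, so that $\b_0-\a_0=\tfrac15-2\eta$ and $\a_0+\b_0=1$, and to exploit repeatedly the following ``staircase'' remark: if $0\le S<\a_0$ and $y_1,\dots,y_r\ge0$ satisfy $y_i<\b_0-\a_0$ for every $i$ and $S+y_1+\cdots+y_r\ge\a_0$, then for the least $k$ with $S+y_1+\cdots+y_k\ge\a_0$ (which exists, and is $\ge1$) we have $S+y_1+\cdots+y_{k-1}<\a_0$, hence $S+y_1+\cdots+y_k<\a_0+(\b_0-\a_0)=\b_0$; thus this partial sum lies in $[\a_0,\b_0)$. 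By hypothesis (b) no sum of distinct $\g_j$ lies in $[\a_0,\b_0]$, so exhibiting such a staircase among the $\g_j$ always yields a contradiction; in particular every sub-collection of the $\g_j$ all of whose members are $<\b_0-\a_0$ has total sum $<\a_0$.

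\textbf{First reductions: $\g_1+\g_2<\a_0$, $\g_3\ge\tfrac15-2\eta$, $\g_2+\g_3<\a_0$.} Suppose $\g_1+\g_2>\b_0$ (the only alternative to $\g_1+\g_2<\a_0$ permitted by (b)). Then (a) gives $\g_1<\tfrac{199}{600}+\tfrac{119\eta}{240}<\a_0$, hence $\g_2\le\g_1<\a_0$ and $\g_2>\b_0-\g_1>\tfrac{161}{600}-\tfrac{359\eta}{240}=\zeta$. Invoke (c): either $\g_3<\tfrac15-2\eta$, or $\g_2+\g_3<\a_0$ and then $\g_3<\a_0-\zeta$, and one checks $\a_0-\zeta<\tfrac15-2\eta$ holds \emph{precisely} for $\eta<\tfrac{82}{5395}$ --- exactly our hypothesis. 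Either way $\g_3<\b_0-\a_0$, so $\g_j<\b_0-\a_0$ for all $j\ge3$; starting the staircase at $S=\g_1<\a_0$ and appending $\g_3,\dots,\g_t$ reaches $\g_1+\g_3+\cdots+\g_t=1-\g_2>\b_0$, contradicting (b). Hence $\g_1+\g_2<\a_0$, so $\g_3+\cdots+\g_t=1-(\g_1+\g_2)>\b_0$; were $\g_3<\tfrac15-2\eta$, the staircase through $\g_3,\dots,\g_t$ (starting from $S=0$) would again contradict (b), so $\g_3\ge\tfrac15-2\eta$, whereupon (c) forces $\g_2+\g_3<\a_0$.

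\textbf{The bound $\g_5\ge\tfrac15-2\eta$, and the final inequality.} Suppose $\g_5<\tfrac15-2\eta$, so $\g_5,\dots,\g_t$ are all $<\b_0-\a_0$. From $\g_2+\g_3<\a_0$ and $\g_2\ge\g_3$ we get $\g_3<\tfrac15+\tfrac\eta2$, so $\g_4\le\g_3<\tfrac15+\tfrac\eta2$; with (a) this gives $\g_1+\g_4<\tfrac{319}{600}+\tfrac{239\eta}{240}<\b_0$ (valid for $\eta<\tfrac{82}{2395}$, in particular under our hypothesis), whence by (b) $\g_1+\g_4<\a_0$. Now start the staircase at $S=\g_1+\g_4<\a_0$ and append $\g_5,\dots,\g_t$: the total is $\g_1+\g_4+\g_5+\cdots+\g_t=1-\g_2-\g_3>1-\a_0=\b_0$, so some partial sum lies in $[\a_0,\b_0)$, contradicting (b). Therefore $\g_5\ge\tfrac15-2\eta$. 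Consequently $\g_3,\g_4,\g_5\ge\tfrac15-2\eta$, so $\g_3+\g_4+\g_5\ge3(\tfrac15-2\eta)=\tfrac35-6\eta>\a_0$ (using $\eta<\tfrac1{35}$). If $\g_1+\g_2+\g_6+\cdots+\g_t>\b_0$, its complement $\g_3+\g_4+\g_5$ would be $<1-\b_0=\a_0$, a contradiction; since (b) also forbids that value in $[\a_0,\b_0]$, we conclude $\g_1+\g_2+\g_6+\cdots+\g_t<\a_0=\tfrac25+\eta$, as required.

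\textbf{Expected obstacles.} I expect the only real care to be needed in (i) the three numerical comparisons in $\eta$ --- $\a_0-\zeta<\tfrac15-2\eta$, $\tfrac{319}{600}+\tfrac{239\eta}{240}<\b_0$, and $\tfrac35-6\eta>\a_0$ --- where one must verify each throughout $0<\eta<\tfrac{82}{5395}$ and recognise that the first is the binding one (this is where the precise value $\tfrac{82}{5395}$ enters); and (ii) a few degenerate small-$t$ cases, which are disposed of by the same staircase/complement bookkeeping ($t\le2$ already contradicts (a); $t=3,4$ are excluded because $\g_3\ge\tfrac15-2\eta$ together with $\g_1+\g_2<\a_0$ would force $\g_3>\b_0>\a_0>\g_1$, resp.\ $\g_2+\g_3=1-\g_1-\g_4>\b_0$ against $\g_2+\g_3<\a_0$), so in fact $t\ge5$ and the last displayed sum is literally $\g_1+\g_2$ when $t=5$.
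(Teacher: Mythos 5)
Your proof is correct, and while it shares the overall ``staircase/interval-avoidance'' mechanism with the paper's argument, the detailed route is genuinely different at each of the three stages.

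For the opening reduction $\g_1+\g_2<\tfrac25+\eta$: assuming $\g_1+\g_2>\tfrac35-\eta$, the paper first \emph{proves} $\g_3\ge\tfrac15-2\eta$ (else a staircase from $\g_1$ forces $\g_2>\tfrac35-\eta$, against (a)), then uses (c) to get $\g_2+\g_3<\tfrac25+\eta$ and finally the arithmetic $\g_1>\g_3+(\tfrac15-2\eta)\ge\tfrac25-4\eta$ to contradict (a). You instead derive $\g_2>\zeta$ from (a) and the assumption, then use (c) and $\eta<\tfrac{82}{5395}$ to prove the \emph{opposite} of what the paper proves, namely $\g_3<\tfrac15-2\eta$, and you contradict (b) via a staircase from $\g_1$. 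Both arguments reduce to the identical numerical threshold $\tfrac{82}{5395}$ (the paper from $\tfrac25-4\eta>\tfrac{199}{600}+\tfrac{119\eta}{240}$, you from $\a_0-\zeta<\tfrac15-2\eta$), which is a pleasant consistency check. For $\g_5\ge\tfrac15-2\eta$: the paper introduces $m=\max\{j:\g_j\ge\tfrac15-2\eta\}$ and pins $m=5$ by showing $m\le4$ forces total sum $<1$ and $m\ge6$ forces total $>1$; you instead suppose $\g_5<\tfrac15-2\eta$, establish the extra estimate $\g_1+\g_4<\tfrac25+\eta$ (using (a), $\g_4<\tfrac15+\tfrac\eta2$, and $\eta<\tfrac{82}{2395}$, which is non-binding), and staircase from $\g_1+\g_4$, avoiding any need to determine $m$ exactly. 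For the final inequality: the paper exploits $m=5$ to know $\g_6,\dots,\g_t<\tfrac15-2\eta$ and staircases again from $\g_1+\g_2$; you use the cleaner complement argument $\g_3+\g_4+\g_5\ge\tfrac35-6\eta>\tfrac25+\eta$, hence $\g_3+\g_4+\g_5>\tfrac35-\eta$ by (b), hence the complement is $<\tfrac25+\eta$ — this has the advantage of not requiring $\g_6<\tfrac15-2\eta$ at all. Your treatment of the degenerate cases $t\le4$ is also sound and plays the role of the paper's ``$m\le4\Rightarrow$ total $<1$'' step. The trade-off is that your route needs one extra (non-binding) numerical check, but it slightly streamlines the case analysis; both proofs are essentially of the same difficulty.
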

 
 \begin{proof}
We first show that 
 \begin{equation}\label{eq1.12}
\g_1 + \g_2 < \frac 25 + \eta. 
 \end{equation}
Suppose the contrary, then by (b) we must have $\g_1 + \g_2 > \frac 35 - \eta$. It is clear that $\g_3 \ge \frac 15 - 2\eta$, since otherwise
 \[\g_1 + \g_3 + \cdots + \g_m \quad
 (3 \le m \le t)\]
are successively found to be in $\left[0, \frac 25 + \eta\right]$ giving $\g_2 \ge \frac 35 - \eta$ which contradicts (a).  We may therefore deduce from (c) that $\g_2 + \g_3 < \frac 25 + \eta$.

Now 
$$\g_1 - \g_3 = (\g_1+\g_2)-(\g_2+\g_3)>(\frac35-\eta)-(\frac25+\eta)=\frac15-2\eta$$
so that 
$$\g_1> \frac15-2\eta+\g_3\geq \frac15-2\eta+\frac15-2\eta=\frac25-4\eta.$$
Since $\eta<\frac{82}{5395}$ this contradicts the assumption (a) that 
$$\g_1<\frac{199}{600}+\frac{119\eta}{240}$$
and therefore proves \eqref{eq1.12} by contradiction.

We now evaluate the largest $m$ for which $\g_m \ge \frac15 - 2\eta$.  If $m \le 4$, then $\g_1 + \g_2 + \g_{m+1} + \cdots + \g_t < \frac25 + \eta$ (arguing as above). Since $\g_3 + \g_4\leq \g_1+\g_2 < \frac25 + \eta$, we get $\g_1 + \cdots + \g_t < 1$ if $m \le 4$; so $m \ge 5$.

If $m \ge 6$, then, since $\eta<\frac{1}{60}$,  
$$\g_1+\ldots+\g_6\geq \frac65-12\eta>1.$$
This contradiction shows that $m$ must be $5$. We now conclude as above that
 \[\g_1 + \g_2 + \g_6 + \cdots +
 \g_t < \frac25 + \eta. \qedhere\]
 \end{proof}
 
 \begin{lem}\label{lem3}
   Let $\frac 15 - 2\eta \le \a_2 < \a_1 < \frac 25 + \eta$ and $\a_2 \le \frac 13$. Suppose that $0 < \eta < \frac{82}{5395}$. Suppose that
 \begin{align*}
\a_1 &= \b_1 + \cdots + \b_r \ , \ \b_1 \ge \cdots \ge \b_r > 0\\
\a_2 &= \b_{r+1} + \cdots + \b_s \ , \ \b_{r+1} \ge \cdots \ge \b_s > 0,\\
1 - \a_1-\a_2 &= \b_{s+1} + \cdots + \b_t \ , \ \b_{s+1} \ge \cdots \ge \b_t > 0. 
 \end{align*}
Let $\g_1 \ge \cdots \ge \g_t$ be the reordering of $\b_1, \ldots,\b_t$ in decreasing order. Suppose that
 \begin{enumerate}
\item[(a)] $\g_1 <\frac{199}{600}+\frac{119\eta}{240}$;

\item[(b)] no sum $\sum\limits_{j\in \mc S} \g_j$ is in $\left[\frac 25 + \eta, \, \frac 35 - \eta\right]$,

\item[(c)] either $\g_3 < \frac 15 - 2\eta$ or $\g_2 + \g_3 < \frac 25 + \eta$.
 \end{enumerate}
Then {\rm either}
 \[\a_1 + \a_2 < \frac 25 + \eta\]
{\rm or}
 \[\a_1 + \a_2 > \frac 35 - \eta, \ , \
 \a_2 < \frac 15 + \frac{4\eta}3.\]
 \end{lem}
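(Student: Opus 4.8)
\emph{Proof proposal.} The plan is to handle the first alternative at once and, in the remaining case, to combine the structural output of Lemma \ref{lem2} with a count of the ``large'' parts (those $\geq\frac15-2\eta$) in each of the three blocks. Since $\a_1+\a_2=\sum_{j\in\mc S}\g_j$ for the index set $\mc S$ picking out the $\b_i$ that make up $\a_1$ and $\a_2$, hypothesis (b) forces $\a_1+\a_2\notin[\frac25+\eta,\frac35-\eta]$; so either $\a_1+\a_2<\frac25+\eta$, and we are done, or $\a_1+\a_2>\frac35-\eta$, which I now assume. The hypotheses of Lemma \ref{lem2} hold verbatim and $\sum_i\g_i=1$, so that lemma gives $t\geq5$, $\g_5\geq\frac15-2\eta$ and $\g_1+\g_2+\g_6+\cdots+\g_t<\frac25+\eta$; subtracting the last inequality from $\sum_i\g_i=1$ yields the key estimate $\g_3+\g_4+\g_5>\frac35-\eta$. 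I would then record: (i) since $\g_4,\g_5\leq\g_3$, $3\g_3>\frac35-\eta$, so $\g_1\geq\g_2\geq\g_3>\frac15-\frac{\eta}{3}$; (ii) a sixth part $\geq\frac15-2\eta$ would force $\sum_i\g_i\geq6(\frac15-2\eta)=\frac65-12\eta>1$ (as $\eta<\frac{1}{60}$), which is absurd, so $\g_6<\frac15-2\eta$ and the parts $\geq\frac15-2\eta$ are exactly $\g_1,\ldots,\g_5$; (iii) since $\g_1+\g_2\geq\frac25-4\eta$, we get $\g_6+\cdots+\g_t<5\eta$.

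Next I would count, for each block, how many of $\g_1,\ldots,\g_5$ lie in it; call these $a,b,c$, so $a+b+c=5$. A block with three of them has sum $\geq\frac35-6\eta>\frac25+\eta$ (valid as $\eta<\frac{1}{35}$), which is incompatible with block $1$ having sum $\a_1<\frac25+\eta$ and with block $3$ having sum $1-\a_1-\a_2<\frac25+\eta$ (from the case assumption); hence $a,c\leq2$. A block with two of them has sum $\geq\frac25-4\eta>\frac13$ (valid as $\eta<\frac{1}{60}$), incompatible with block $2$ having sum $\a_2\leq\frac13$; hence $b\leq1$. Therefore $b=1$: block $2$ contains exactly one of $\g_1,\ldots,\g_5$, say $\g_j$, and $\a_2=\g_j+\s$ where $\s$, the sum of the small parts in block $2$, satisfies $0\leq\s\leq\g_6+\cdots+\g_t<5\eta$. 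Since $b\leq1$, at least two of $\g_1,\g_2,\g_3$ lie outside block $2$; fix one of them, $\g_k$ with $k\leq3$, so $\g_k>\frac15-\frac{\eta}{3}$.

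Finally, $\a_2+\g_k$ is again of the form $\sum_{j\in\mc S}\g_j$ (the parts of block $2$ together with $\g_k$, which are distinct since $\g_k$ lies in a different block). Using $\g_j+\g_k\leq\g_1+\g_2<\frac25+\eta$ I obtain $\a_2+\g_k=\g_j+\g_k+\s<\frac25+6\eta<\frac35-\eta$, so by (b) we must have $\a_2+\g_k<\frac25+\eta$, and hence $\a_2<\frac25+\eta-\g_k<\frac25+\eta-(\frac15-\frac{\eta}{3})=\frac15+\frac{4\eta}{3}$, which is the required bound. The one delicate point is this last adjunction step together with the sharp estimate $\g_k>\frac15-\frac{\eta}{3}$: replacing it by the weaker $\g_k\geq\frac15-2\eta$, or bounding $\s$ lazily, yields only $\frac15+O(\eta)$ with too large a constant, so the gain rests squarely on the inequality $\g_3+\g_4+\g_5>\frac35-\eta$ coming out of Lemma \ref{lem2}.
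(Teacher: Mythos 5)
Your proof is correct. Both you and the paper open the same way (dispose of the easy case, invoke Lemma \ref{lem2}, derive $\g_3+\g_4+\g_5>\frac35-\eta$), and both arrive at the same pivot: block $2$ contains at most one of $\g_1,\dots,\g_5$. But you get there and finish differently. The paper proves the pivot in one line by observing $\g_4+\g_5=(\g_3+\g_4+\g_5)-\g_3>\frac25-\frac{3\eta}{2}>\frac13\geq\a_2$ (so no two of the top five fit inside block $2$), and then directly reads off $\a_2\leq\g_1+\g_6+\cdots+\g_t<\frac25+\eta-\g_2$ from the inequality supplied by Lemma \ref{lem2}, closing with $\g_2\geq\frac13(\g_3+\g_4+\g_5)>\frac15-\frac{\eta}{3}$. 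You instead count the distribution $(a,b,c)$ of the five large parts across the three blocks, pin it to $(2,1,2)$, and then finish by an \emph{adjunction} argument: adjoin a large $\g_k$ from outside block $2$ to the index set of $\a_2$ and apply hypothesis (b) a second time to the new sum $\a_2+\g_k$. This is a genuinely different way to extract the final inequality, and it is sound: you are careful that $\a_2+\g_k$ is a legitimate subset sum, that it stays below $\frac35-\eta$ (via $\g_j+\g_k\leq\g_1+\g_2<\frac25+\eta$ and $\s<5\eta$), and that $\g_k>\frac15-\frac{\eta}{3}$. The block-counting step produces more information than strictly needed ($a=c=2$ is never used), and the paper's route is a little tighter, but both work under the stated hypotheses on $\eta$.
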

 
 \begin{proof}
We may suppose that $\a_1 + \a_2 \ge \frac 25 + \eta$, hence $\a_1 + \a_2 > \frac 35 - \eta$ by (b). By \Cref{lem2}, we have $\g_5 \ge \frac 15 - 2\eta$ and
 \begin{equation}\label{eq1.13}
\g_1 + \g_2 + \g_6 + \cdots + \g_t< \frac 2{5} + \eta. 
 \end{equation}
We deduce from this that 
\begin{equation}\label{eq:comp}
\g_3+\g_4+\g_5>\frac35-\eta.
\end{equation}
Since $\a_2 \leq \frac 13$ and $\g_3\leq \g_2 < \frac 15 + \frac \eta 2$ from \eqref{eq1.13} we obtain
 \[\g_4+\g_5=\g_3+\g_4+\g_5-\g_3 > \frac 35 - \eta - \left(
 \frac 15 + \frac \eta 2\right) = \frac 25
 - \frac {3\eta}2 > \frac 13 \geq \a_2.\]
Therefore \textit{at most} one of $\g_1, \ldots, \g_5$ can be found in $\{\b_{r+1}, \ldots, \b_s\}$. Hence, by \eqref{eq1.13}, 
 \[\a_2 \le \g_1 + \g_6 + \ldots+\g_t <
 \frac 2{5} + \eta - \g_2.\]
Using \eqref{eq:comp} we have 
 \[\g_2 \ge \frac 13\, (\g_3 + \g_4 + \g_5)
 > \frac 15 - \frac\eta 3\]
so $\a_2 < \frac 15 + \frac{4\eta}3$.
 \end{proof}

The following lemma is very significant in our argument as it makes crucial use of the Type III information given in Lemma \ref{type3}.  Without this result we would need a much wider Type II interval.

\begin{lem}\label{hblem}
Suppose $\eta<\frac{22}{3295}$.   The function 
$$\threesum{n=p_1p_2n_3}{1/5-2\eta\leq \a_2<\a_1< 2/5+\eta}{\a_1+\a_2> 3/5-\eta,\a_2\geq 1/5+4\eta/3}\psi(n_3,p_2)$$
has exponent of distribution $\theta_0(\eta)$ to smooth moduli.  
\end{lem}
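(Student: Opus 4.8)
The plan is to reduce the function to a sum over products of exactly three medium primes, to decompose each prime factor by Heath--Brown's identity in the manner of \cite[Section~3]{polymath8a}, and then to let the combinatorial dichotomy of Lemma~\ref{lem3} route every resulting convolution into Lemma~\ref{type1}, Lemma~\ref{type2} or Lemma~\ref{type3}. First I would observe that throughout the range of summation $n_3$ is a single prime $p_3$: since $\a_1+\a_2>\frac35-\eta$ we have $n_3<(2x)^{2/5+\eta}$, whereas every prime factor of $n_3$ is at least $p_2\ge(2x)^{1/5+4\eta/3}$, so $n_3$ having two such factors would force $n_3\ge(2x)^{2/5+8\eta/3}$, which is impossible for $\eta>0$; and $n_3=1$ would give $\a_1+\a_2=1$, also impossible. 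The constraint $p_3\ge p_2$ gives $1-\a_1-\a_2\ge\a_2$, which together with $\a_1>\a_2$ yields $\a_2<\frac13$, so the hypotheses of Lemma~\ref{lem3} on $\a_1,\a_2$ hold. After a finer-than-dyadic splitting pinning down $\a_1,\a_2,\a_3$ (and absorbing the ordering and inequality constraints), the function becomes a sum of $\log^{O(1)}x$ pieces, each a convolution of three prime-detecting sequences at scales $(2x)^{\a_1},(2x)^{\a_2},(2x)^{\a_3}$ with $\a_i\in[\frac15-2\eta,\frac25+\eta)$, $\a_1+\a_2+\a_3=1$ and all three pairwise sums exceeding $\frac35-\eta$.

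Next I would apply Heath--Brown's identity to each of the three prime factors (passing through $\Lambda$, discarding the negligible prime-power contribution and absorbing the smooth weight $1/\log$) with a large fixed parameter. At scale $(2x)^{\a_i}$ with $\a_i>\frac15-2\eta$ this expresses the $i$-th factor, after dyadic splitting, as $\log^{O(1)}x$ convolutions in which every non-smooth ($\mu$-type) factor has scale $(2x)^{o(1)}$ while the remaining factors are smooth; in particular the largest factor of each of the three blocks is smooth, and for the parameter large enough every factor of scale at least $(2x)^{1/5-2\eta}$ is smooth. Combining the three blocks, our function becomes $\log^{O(1)}x$ convolutions $c_1*\cdots*c_t$ with scales $(2x)^{\g_1}\ge\cdots\ge(2x)^{\g_t}$, partitioned into three blocks of exponent-sums $\a_1,\a_2,\a_3$ --- precisely the configuration of Lemma~\ref{lem3}. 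The Siegel--Walfisz hypotheses needed below follow for all these sequences as usual, from the theory of Dirichlet $L$-functions.

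Now, since we are in the case $\a_1+\a_2>\frac35-\eta$ and $\a_2\ge\frac15+\frac{4\eta}3$, the conclusion of Lemma~\ref{lem3} fails, so for each piece at least one of its conditions (a), (b), (c) fails for the associated $\g_j$. If (b) fails, some subset of the $\g_j$ has sum in $[\frac25+\eta,\frac35-\eta]$; grouping those factors against the remaining ones (swapping the two groups if necessary, using that this interval is symmetric under $\g\mapsto1-\g$) exhibits the piece as a Type~II convolution, handled by Lemma~\ref{type2}. Otherwise (b) holds. If moreover (a) fails, then $\g_1\ge\frac{199}{600}+\frac{119\eta}{240}$ with $c_1$ smooth, and grouping $c_1$ against the rest gives a Type~I convolution, handled by Lemma~\ref{type1}. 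In the remaining case (a) and (b) hold, so (c) must fail, whence $\g_3\ge\frac15-2\eta$ and $\g_2+\g_3\ge\frac25+\eta$; then $c_1,c_2,c_3$ are smooth, $\g_1,\g_2,\g_3\in[\frac15-2\eta,\frac25+\eta)$, and since (b) holds $\g_2+\g_3>\frac35-\eta$, so all three pairwise sums among $\g_1,\g_2,\g_3$ exceed $\frac35-\eta$; grouping the remaining factors into one arbitrary sequence puts the piece in the form of Lemma~\ref{type3}, whose hypotheses --- in particular $\eta<\frac{22}{3295}$ --- are then exactly met. This handles every piece.

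The step I expect to be the main obstacle is this last one: confirming that whenever neither a Type~I nor a Type~II regrouping is available the piece genuinely lands inside the Type~III range of Lemma~\ref{type3} --- this is the only place in the whole argument where the full force of $\eta<\frac{22}{3295}$ is used, and where the combinatorial content of Lemmas~\ref{lem2} and~\ref{lem3} is essential --- together with the routine but delicate bookkeeping of checking, for each admissible regrouping, that the scale, smoothness and Siegel--Walfisz hypotheses of the relevant lemma are satisfied.
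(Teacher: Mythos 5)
Your proposal is correct and follows essentially the same route as the paper: reduce to $n_3$ being a single prime, apply Heath--Brown to each of $p_1,p_2,p_3$, and let the contrapositive of Lemma~\ref{lem3} route each resulting convolution to one of Lemmas~\ref{type1}, \ref{type2} or~\ref{type3}. The paper states the last step in a single sentence; your case analysis of which hypothesis of Lemma~\ref{lem3} fails and how each failure yields a usable Type~I, II or III configuration (including the observation that $\g_2+\g_3>\frac35-\eta$ once (b) holds and (c) fails) is exactly the implicit content of the paper's proof, and it checks out.
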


\begin{proof}
We begin by observing that $\psi(n_3,p_2)=0$ if $p_2>n_3$.  We may therefore impose the further constraint 
$\a_1+2\a_2\leq 1$.  This implies that $\a_2\leq \frac13$. Next we show that the only summands which contribute to the function have $n_3$ prime.  This follows if $\a_1+3\a_2>1$ which may be verified since 
$$\a_1+3\a_2=\a_1+\a_2+2\a_2> \frac35-\eta+2(\frac{1}{5}+\frac{4\eta}{3})=1+\frac{5\eta}{3}.$$
It follows that the function under consideration is 
$$\threesum{n=p_1p_2p_3}{1/5-2\eta\leq \a_2<\a_1< 2/5+\eta,\a_2\leq \a_3}{\a_1+\a_2> 3/5-\eta,1/5+4\eta/3\leq \a_2\leq 1/3}1.$$

We now apply the Heath-Brown identity \cite{rhbidentity} to each prime $p_i$.  After a relatively simple treatment of the proper prime powers and a finer than dyadic subdivision to remove the weights $\log p_i$ we may decompose our function as a sum of $\log^{O(1)}x$ summands  (see Polymath \cite[Section 3]{polymath8a} for some similar arguments).  We may then use the previous lemma to show that all of these summands may be handled by one of Lemmas \ref{type2}, \ref{type1} or \ref{type3}.  
\end{proof}

We now turn our attention to the remaining functions $\rho_3(n)$ and $\rho_4(n)$.

\begin{lem}\label{rho3}
We have 
$$\rho_3(n)=\rho_0(n)+\rho_5(n)$$
for a positive function $\rho_5(n)$ which satisfies 
$$\sum_{x\leq n\leq 2x}\rho_5(n)=\frac{x}{\log x}\left(\int_{E(\eta)} f(\bs \a)d\bs \a+o(1)\right),$$
where the integrand and region of integration are as defined in Lemma \ref{minorant}.
\end{lem}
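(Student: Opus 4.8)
The plan is to reduce $\rho_3$ to a sum over five primes, peel off a piece that falls under Lemma~\ref{type2}, and recognise what remains as the counting function of $E(\eta)$. First I would note that $n_5$ must be prime in every non-zero summand of $\rho_3$: the factor $\psi(n_5,p_4)$ forces every prime factor of $n_5$ to exceed $p_4=(2x)^{\a_4}$, while the exponent $\b_5$ of $n_5$ equals $1-\a_1-\a_2-\a_3-\a_4$ up to $o(1)$. Since $\a_j>\frac15-2\eta$ for $1\le j\le 4$ we get $\b_5<\frac15+8\eta$, and since $\a_3+\a_4<\a_1+\a_2<\frac25+\eta$ we get $\b_5>\frac15-2\eta>0$. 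As $\eta<\frac{1}{60}$ we have $2\a_4\ge\frac25-4\eta>\frac15+8\eta>\b_5$, so $n_5$ cannot be a product of two primes exceeding $p_4$, and being $>1$ it is a single prime $p_5$ with $\a_5:=\b_5\ge\a_4$. Hence
\[
\rho_3(n)=\threesum{n=p_1p_2p_3p_4p_5}{1/5-2\eta\le\a_4<\a_3<\a_2<\a_1<2/5+\eta}{\a_1+\a_2<2/5+\eta,\ \a_5\ge\a_4}1 .
\]

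Next I would split this sum according as $\a_2+\a_3+\a_4\le\frac35-\eta$ or $\a_2+\a_3+\a_4>\frac35-\eta$. In the first case, group the five primes as $N=p_1p_5$ and $M=p_2p_3p_4$, so that $MN\asymp x$. Because $\a_j>\frac15-2\eta$ for $j=2,3,4$ we have $\a_2+\a_3+\a_4>\frac35-6\eta$, and since $\eta<\frac{1}{35}$ the case hypothesis places $M$ at a scale in $[x^{1/2},x^{3/5-\eta}]$ while $N$ lies at scale $x^{1-\a_2-\a_3-\a_4}\in[x^{2/5+\eta},x^{1/2}]$. Apply a finer than dyadic subdivision of the ranges of $\a_1,\dots,\a_5$ into $\log^{O(1)}x$ boxes, as in \cite[Section~3]{polymath8a}. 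Any box at a positive distance from the hyperplane $\a_2+\a_3+\a_4=\frac35-\eta$ yields a convolution $\a*\b$ of coefficient sequences---products of at most three primes from fixed short intervals, hence satisfying the Siegel--Walfisz theorem---at scales in the range required by Lemma~\ref{type2}, so that box has exponent of distribution $\theta_0(\eta)$ (note $\eta<\frac{2}{95}$). Since the number of boxes is $\log^{O(1)}x$ and, for each $\epsilon$ and our fixed $\eta$, the value of $\delta$ supplied by Lemma~\ref{type2} is bounded away from $0$, the remark after Definition~\ref{smoothlevel} shows that the part of $\rho_3$ with $\a_2+\a_3+\a_4\le\frac35-\eta$, together with the $o(x/\log x)$-worth of boundary boxes, has exponent of distribution $\theta_0(\eta)$; call this function $\rho_0(n)$.

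Put $\rho_5(n)=\rho_3(n)-\rho_0(n)$. Up to the boundary boxes just absorbed, $\rho_5$ is the counting function
\[
\rho_5(n)=\sum_{\substack{n=p_1p_2p_3p_4p_5\\(\a_1,\a_2,\a_3,\a_4)\in E(\eta)}}1 ,
\]
where $\a_5=1-\a_1-\a_2-\a_3-\a_4$; indeed the conditions defining the remaining summands, with the constraint $\a_5\ge\a_4$ rewritten as $\a_1+\a_2+\a_3+2\a_4\le1$, are exactly those of $E(\eta)$. In particular $\rho_5\ge0$. By the prime number theorem---in the form that the number of ordered prime five-tuples with $p_1\cdots p_5\in[x,2x]$ and each $\a_i$ confined to a prescribed short interval is asymptotic to $\frac{x}{\log x}\cdot\frac{1}{\a_1\a_2\a_3\a_4\a_5}$ times the product of the lengths---we obtain
\[
\sum_{x\le n\le2x}\rho_5(n)=\frac{x}{\log x}\Bigl(\int_{E(\eta)}f(\bs\a)\,d\bs\a+o(1)\Bigr),
\]
the boundary layer contributing $o(x/\log x)$ because its measure tends to $0$. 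This is the assertion of the lemma.

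The substantive point is the dichotomy in the second paragraph: the part of $\rho_3$ which we cannot handle should be precisely the set where no partial product of $p_1,\dots,p_5$ has a scale in $[x^{2/5+\eta},x^{3/5-\eta}]$, namely $E(\eta)$. This works out because, for $\eta$ small, the constraints pin all of $\a_1,\dots,\a_5$ near $\frac15$; in particular $\a_1+\a_5=1-(\a_2+\a_3+\a_4)$ is $\ge\frac25+\eta$ exactly when $\a_2+\a_3+\a_4\le\frac35-\eta$, while the complementary upper bounds (which keep both $M$ and $N$ below $x^{3/5-\eta}$) come from $\a_j>\frac15-2\eta$ and $\eta<\frac{1}{35}$. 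I do not expect any real difficulty here; the rest---the finer than dyadic bookkeeping, the Siegel--Walfisz verification, and the disposal of the boundary layer---is carried out exactly as in \cite{polymath8a}.
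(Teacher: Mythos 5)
Your proof is correct and follows essentially the same route as the paper's: impose the constraint forcing $\a_1+\a_2+\a_3+2\a_4\le 1$, split off the part where a partial product of the five factors falls in the Type~II window $[\frac25+\eta,\frac35-\eta]$ (equivalently $\a_2+\a_3+\a_4\le\frac35-\eta$), observe $n_5$ is prime (your bound $2\a_4>\b_5$ for $\eta<\frac1{60}$ is exactly the paper's $\a_1+\a_2+\a_3+3\a_4>1$), and identify the remainder as a count over $E(\eta)$ evaluated by the prime number theorem. The only cosmetic differences are that you establish primality of $n_5$ before the Type~II split rather than after, and you spell out the choice of grouping $M=p_2p_3p_4$, $N=p_1p_5$ which the paper leaves implicit.
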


\begin{proof}
Recall that 
$$\rho_3(n)=\threesum{n=p_1p_2p_3p_4n_5}{1/5-2\eta\leq \a_4<\a_3<\a_2<\a_1< 2/5+\eta}{\a_1+\a_2<2/5+\eta}\psi(n_5,p_4).$$
If $p_4>n_5$ then $\psi(n_5,p_4)=0$ so we may impose the additional constraint $\a_1+\a_2+\a_3+2\a_4\leq 1$.  

Any part of $\rho_3$ for which a sum of some of the $\a_i$ lies in the Type II range $[\frac25+\eta,\frac35-\eta]$ may be handled by Lemma \ref{type2}.  Since $\a_1+\a_2<\frac25+\eta$ we know that no sum of $1$ or $2$ variables can lie in that range.  It can be seen that a sum of $3$ variables lies in the Type II range if and only if $\a_2+\a_3+\a_4\leq \frac35-\eta$.  We therefore obtain 
$$\rho_3(n)=\rho_0(n)+\rho_5(n)$$
with 
$$\rho_5(n)=\threesum{n=p_1p_2p_3p_4n_5}{1/5-2\eta\leq \a_4<\a_3<\a_2<\a_1< 2/5+\eta}{\a_1+\a_2<2/5+\eta,\a_1+\a_2+\a_3+2\a_4\leq 1,\a_2+\a_3+\a_4>3/5-\eta}\psi(n_5,p_4).$$
We now claim that the only nonzero summands in $\rho_5$ come from prime values of $n_5$.  This will follow if $\a_1+\a_2+\a_3+3\a_4>1$ which holds since $\eta<\frac{1}{60}$ and  
$$\a_1+\a_2+\a_3+3\a_4>6\a_4\geq \frac65-12\eta.$$
We may now estimate 
$$\sum_{x\leq n\leq 2x}\rho_5(n)$$
using standard techniques and the result follows.  
\end{proof}

Before dealing with $\rho_4$ we need a further combinatorial fact.

\begin{lem}\label{permlem}
Suppose $\a_1>\ldots>\a_5$ are real numbers.

\begin{enumerate}
\item There are precisely $4$ permutations $\b_1,\ldots,\b_5$ of the $\a_i$ for which 
$$\b_1>\b_2>\b_3>\b_4\text{ and }\b_4<\b_5.$$

\item There are precisely $20$ permutations $\b_1,\ldots,\b_5$ with 
$$\b_1>\b_2,\ \b_2<\b_3,\text{ and }\b_4<\b_5.$$
\end{enumerate}
\end{lem}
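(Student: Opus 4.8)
The plan is to treat both parts as elementary enumeration problems. Since $\a_1 > \cdots > \a_5$ are distinct, a permutation $\b_1,\dots,\b_5$ of them is determined by the underlying permutation of $\{1,\dots,5\}$, so in both cases it suffices to count arrangements of five distinct reals subject to the stated inequalities; an alternative (and a useful sanity check) would be to run through all $120$ permutations directly, but the structured counts below are cleaner.

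For part (1), the conditions $\b_1 > \b_2 > \b_3 > \b_4$ force the four values placed in positions $1,2,3,4$ to appear there in decreasing order, so the whole arrangement is determined once we decide which of the five values is $\b_5$. Thus there are exactly five candidate arrangements, and it remains to check for which choices of $\b_5$ the extra condition $\b_4 < \b_5$ holds. Here $\b_4$ is the smallest of the four values not chosen to be $\b_5$: if $\b_5 \ne \a_5$ then $\a_5$ lies among those four, so $\b_4 = \a_5 < \b_5$ and the condition holds; if $\b_5 = \a_5$ then $\b_4 = \a_4 > \a_5 = \b_5$ and it fails. Hence exactly the four choices $\b_5 \in \{\a_1,\a_2,\a_3,\a_4\}$ are admissible, giving $4$ permutations.

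For part (2), I would exploit the fact that the constraints split into a condition on positions $1,2,3$ (namely $\b_1 > \b_2 < \b_3$, a ``valley'') and a condition on positions $4,5$ (namely $\b_4 < \b_5$), with nothing linking the two blocks. First choose the two-element set of values to be placed in positions $4,5$: there are $\binom{5}{2} = 10$ choices, and for each the requirement $\b_4 < \b_5$ fixes the order uniquely. The remaining three values must then be arranged in positions $1,2,3$ so as to form a valley; among the $3! = 6$ orderings of three distinct numbers exactly two (those with the middle entry smallest, i.e. the patterns $213$ and $312$) have this shape. Multiplying gives $10 \cdot 2 = 20$.

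I do not expect a genuine obstacle here: both parts are finite counting arguments. The only point needing a little care is the verification in part (1) that the auxiliary inequality $\b_4 < \b_5$ excludes precisely one of the five otherwise-valid configurations.
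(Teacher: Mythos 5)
Your proof is correct and follows essentially the same approach as the paper: in part (1) you observe that the arrangement is determined by the choice of $\b_5$ and that exactly the four choices $\b_5\in\{\a_1,\ldots,\a_4\}$ make $\b_4<\b_5$ hold; in part (2) you factor the count as $\binom{5}{2}=10$ ordered choices for the last two positions times the $2$ valley orderings of the remaining three, giving $20$. The paper's own proof is just a terser version of exactly this argument.
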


\begin{proof}
  \begin{enumerate}
  \item We must choose $\b_5$ from $\{\a_1,\ldots,\a_4\}$ and for each such choice there is precisely $1$ permissible permutation of the remaining variables.

\item There are precisely $10$ choices for $\b_4,\b_5$, namely any pair $\a_i,\a_j$ with $i>j$.  There are then exactly two orderings of the remaining $\a$ to give $\b_1,\b_2,\b_3$.  
  \end{enumerate}
\end{proof}

\begin{lem}\label{rho4}
We have 
$$\rho_4(n)=\rho_0(n)+\rho_6(n)$$ 
where $\rho_6(n)$ is positive and 
$$\sum_{x\leq n\leq 2x}\rho_6(n)=\frac{x}{\log x}\left(5\int_{E(\eta)} f(\bs \a)d\bs \a+o(1)\right).$$
\end{lem}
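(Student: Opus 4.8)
The plan is to argue along the lines of Lemma~\ref{rho3}, the essential new feature being the minus sign in $\rho_4$, which forces us not merely to compute a sum but to exhibit $\rho_4$ as an exponent-$\theta_0(\eta)$ function plus a \emph{nonnegative} one. First I would recall that, by the construction preceding Lemma~\ref{fundlem},
$$\rho_4(n)=-\threesum{n=p_1p_2p_3n_4}{1/5-2\eta\leq \a_3<\a_2<\a_1< 2/5+\eta}{\a_1+\a_2>3/5-\eta,\ \a_2<1/5+4\eta/3}\psi(n_4,p_3).$$
Since $\psi(n_4,p_3)=0$ whenever $p_3>n_4$ we may impose $\a_1+\a_2+2\a_3\le 1$, so that $\a_4:=1-\a_1-\a_2-\a_3\ge\a_3$; then, exactly as in Lemmas~\ref{hblem} and~\ref{rho3}, in the resulting range one has $1-\a_1-\a_2-\a_3<\tfrac15+3\eta$, while every prime factor of $n_4$ is at least $p_3\ge x^{1/5-2\eta}$, and since $\eta<\tfrac1{35}$ this forces $n_4$ to be prime. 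Writing $p_4=n_4$ we obtain
$$-\rho_4(n)=\threesum{n=p_1p_2p_3p_4}{1/5-2\eta\le\a_3<\a_2<\a_1<2/5+\eta,\ \a_4\ge\a_3}{\a_1+\a_2>3/5-\eta,\ \a_2<1/5+4\eta/3}1 .$$

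Next I would apply the Heath--Brown identity~\cite{rhbidentity} to each of the primes $p_1,\dots,p_4$, dispose of the proper prime powers, and apply a finer-than-dyadic subdivision, as in Lemma~\ref{hblem}; this writes $-\rho_4$ as a sum of $\log^{O(1)}x$ convolutions. For each such piece, listing the scales of its variables in decreasing order as $\g_1\ge\cdots\ge\g_t$ and grouping the variables coming from $p_3$ and $p_4$ as the block of total scale $1-\a_1-\a_2$ (legitimate since $\a_2\le\tfrac13$), Lemma~\ref{lem3} applies. If hypothesis~(a) of that lemma fails then some $\g_j\ge\tfrac{199}{600}+\tfrac{119\eta}{240}$ and the piece is of Type~I (Lemma~\ref{type1}); if~(b) fails it is of Type~II (Lemma~\ref{type2}); and if~(c) fails then, together with~(b), the three largest scales lie in $[x^{1/5-2\eta},x^{2/5+\eta}]$ with pairwise products $\ge x^{3/5-\eta}$, so the piece is of Type~III (Lemma~\ref{type3}). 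Hence every piece for which~(a),~(b),~(c) do not all hold has exponent $\theta_0(\eta)$ and may be absorbed into a term $\rho_0(n)$.

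There remain the pieces for which~(a),~(b) and~(c) all hold. For these, Lemma~\ref{lem2} gives $\g_5\ge\tfrac15-2\eta$, $\g_1+\g_2+\g_6+\cdots+\g_t<\tfrac25+\eta$ and (from its proof) that \emph{exactly} five of the $\g_j$ exceed $x^{1/5-2\eta}$; since the M\"obius components of the Heath--Brown identity have scale at most $x^{(2/5+\eta)/3}<x^{1/5-2\eta}$, these five variables are smooth. As three of the primes then contribute a single large smooth variable each while the fourth (necessarily the one of scale $\asymp x^{2/5}$) contributes two, reassembling these pieces by Buchstab's identity shows that they combine into $-\rho_6(n)$, where $\rho_6(n)$ is the nonnegative sum $\sum_{n=q_1\cdots q_5}1$ taken over those products of five primes whose scales $\b_1,\dots,\b_5$, suitably ordered so that $\b_1>\b_2$, $\b_2<\b_3$ and $\b_4<\b_5$, satisfy the conditions~\eqref{eq1.2} defining $E(\eta)$. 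This gives $\rho_4=\rho_0+\rho_6$ with $\rho_6$ positive. Finally, estimating $\sum_{x\le n\le 2x}\rho_6(n)$ by the prime number theorem for products of five primes, as in Lemma~\ref{rho3}, and using Lemma~\ref{permlem} (a generic quintuple of scales being admissible under $20$ orderings for $\rho_6$ against the $4$ orderings attached to $E(\eta)$, while $f$ is symmetric) to identify the resulting integral, one obtains
$$\sum_{x\le n\le 2x}\rho_6(n)=\frac{x}{\log x}\Bigl(5\int_{E(\eta)}f(\bs\a)\,d\bs\a+o(1)\Bigr).$$

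I expect the main difficulty to be the last part of the third step: verifying that the Heath--Brown pieces failing~(a),~(b),~(c) genuinely reorganise, \emph{with the correct overall sign}, into the single nonnegative five-prime Buchstab term $\rho_6$ described above, rather than into an indefinite combination of convolutions, and matching the book-keeping there with the permutation count of Lemma~\ref{permlem} so as to obtain the factor $5$.
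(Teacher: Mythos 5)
Your proof diverges from the paper's at the crucial step, and the point you flag at the end as ``the main difficulty'' is in fact a genuine gap rather than a bookkeeping issue: the Heath--Brown identity cannot produce a nonnegative remainder. Heath--Brown's identity is an exact algebraic decomposition of the prime indicator into $\log^{O(1)}x$ signed convolutions; after you absorb those pieces of Type~I, II or III into $\rho_0(n)$, the leftover pieces do \emph{not} carry a definite sign and do not ``recombine by Buchstab's identity'' into a positive five-prime count --- they remain an indefinite linear combination of $\mu$--$1$--$\log$ convolutions. Indeed, Lemma~\ref{lem3}'s conclusion for $\rho_4$ lands squarely in its second branch ($\a_1+\a_2>\frac35-\eta$, $\a_2<\frac15+\frac{4\eta}{3}$), so no contradiction arises when $(a)$, $(b)$, $(c)$ all hold, and the argument that succeeded in Lemma~\ref{hblem} (where that branch is \emph{excluded} by the constraint $\a_2\ge\frac15+\frac{4\eta}{3}$) does not transfer.

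The paper's route is a reversal of roles. After dropping the constraints $\a_1>\a_2$ and $\a_1<\frac25+\eta$ (the latter by applying Lemma~\ref{type2} on the small Type~II range $\a_1\in[\frac25+\eta,\frac25+6\eta]$) and rewriting $\a_1+\a_2>\frac35-\eta$ as $\a_3+\a_4<\frac25+\eta$, one replaces the prime $p_1$ by a sieved integer $n_1$, i.e.\ replaces the deleted $\mathbf{1}_{p_1\text{ prime}}$ by $\psi(n_1,n_1^{1/2})$. One then compares with the same sum weighted by $\psi(n_1,\lambda)$, which is exactly Lemma~\ref{fundlem}(6) and has exponent $\theta_0(\eta)$. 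The constraints force $n_1<\lambda^3$, so
$$\psi(n_1,\lambda)-\psi\bigl(n_1,n_1^{1/2}\bigr)\in\{0,1\}$$
is precisely the indicator that $n_1=p_5p_6$ with $\lambda\le p_5\le p_6$. This is where the positivity comes from: the minus sign on $\rho_4$ is cancelled by the sign of $\psi(n_1,n_1^{1/2})-\psi(n_1,\lambda)$, giving $\rho_4=\rho_0+\rho'_6$ with $\rho'_6\ge0$ a sum over five primes. The rest (discarding Type~II sub-sums, removing the now-redundant condition $\a_2<\frac15+\frac{4\eta}{3}$, and the $20$-versus-$4$ count from Lemma~\ref{permlem} to get the factor~$5$) matches what you sketched, but the essential missing idea is the reversal of roles, not the combinatorics.
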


\begin{proof}
Recall that 
$$\rho_4(n)=-\threesum{n=p_1p_2p_3n_4}{1/5-2\eta\leq \a_3<\a_2<\a_1< 2/5+\eta}{\a_1+\a_2>3/5-\eta,\a_2<1/5+4\eta/3}\psi(n_4,p_3).$$
We observe that 
$$\a_1+\a_2+3\a_3>\frac35-\eta+3\a_3>\frac65-7\eta$$
which is greater than $1$ if $\eta<\frac{1}{35}$.  It follows that the only nonzero terms in $\rho_4$ have $n_4$ prime and $\a_4>\a_3$:
$$\rho_4(n)=-\threesum{n=p_1p_2p_3p_4}{1/5-2\eta\leq \a_3<\a_2<\a_1< 2/5+\eta}{\a_1+\a_2>3/5-\eta,\a_2<1/5+4\eta/3,\a_4>\a_3}1.$$
The condition $\a_1>\a_2$ may be dropped since we have 
$$\a_1=(\a_1+\a_2)-\a_2>\frac35-\eta-(\frac15+\frac{4\eta}{3})=\frac25-\frac{7\eta}{3}.$$
In addition we have 
$$\a_1\leq 1-3(\frac15-2\eta)=\frac25+6\eta.$$
Therefore, if we use Lemma \ref{type2} on the range $\a_1\in [\frac25+\eta,\frac25+6\eta]$ we may drop the constraint $\a_1\leq \frac25+\eta$ to obtain 
$$\rho_4(n)=\rho_0(n)-\threesum{n=p_1p_2p_3p_4}{1/5-2\eta\leq \a_3<\a_2< 2/5+\eta}{\a_3+\a_4<2/5+\eta,\a_2<1/5+4\eta/3,\a_4>\a_3}1$$
(in which the condition $\a_1+\a_2>\frac35-\eta$ has been replaced with the equivalent $\a_3+\a_4<\frac25+\eta$).

We now perform a reversal of roles, replacing the prime $p_1$ by an integer $n_1$ whose primality is detected with the sieve.  Writing $n_1=(2x)^{\a_1}$ this yields 
$$\rho_4(n)=\rho_0(n)+\rho'_4(n)$$
with 
$$\rho'_4(n)=-\threesum{n=n_1p_2p_3p_4}{1/5-2\eta\leq \a_3<\a_2< 2/5+\eta}{\a_3+\a_4<2/5+\eta,\a_2<1/5+4\eta/3,\a_4>\a_3}\psi(n_1,n_1^{1/2}).$$
This will be compared with 
$$\rho''_4(n)=-\threesum{n=n_1p_2p_3p_4}{1/5-2\eta\leq \a_3<\a_2< 2/5+\eta}{\a_3+\a_4<2/5+\eta,\a_2<1/5+4\eta/3,\a_4>\a_3}\psi(n_1,\l).$$
By the final part of Lemma \ref{fundlem} the function $\rho''_4(n)$ has exponent of distribution $\theta_0(\eta)$ to smooth moduli.  We observe that 
$$\a_2+\a_3+\a_4+3(\frac15-2\eta)\geq 6(\frac15-2\eta)=\frac65-12\eta>1,$$
since $\eta<\frac{1}{60}$.  This means that for any nonzero term in $\rho''_4(n)$ we have $n_1<\l^3$.  Therefore, if a term occurs in $\rho''_4(n)$ but not in $\rho'_4(n)$ it must have $n_1=p_5p_6$ with $\l\leq p_5\leq p_6$.  We therefore conclude that 
$$\rho_4(n)=\rho_0(n)+\rho'_6(n)$$
with 
$$\rho'_6(n)=\threesum{n=p_2p_3p_4p_5p_6}{1/5-2\eta\leq \a_3<\a_2< 2/5+\eta}{\a_3+\a_4<2/5+\eta,\a_2<1/5+4\eta/3,\a_4>\a_3,1/5-2\eta\leq\a_5\leq \a_6}1.$$
We let $\rho_6(n)$ be the same function as $\rho'_6(n)$ but with the added constraint that no sum of the variables $\a_2,\ldots,\a_6$ is in $[\frac25+\eta,\frac35-\eta]$.  By Lemma \ref{type2} we have 
$$\rho'_6(n)=\rho_0(n)+\rho_6(n).$$
Next we show that in $\rho_6(n)$ the condition $\a_2<\frac15+\frac{4\eta}{3}$ is redundant.  We begin by observing that, in $\rho'_6(n)$, any sum of $3$ of the variables exceeds $\frac35-6\eta$ and therefore, in $\rho_6(n)$, any sum of $2$ variables must be smaller than $\frac25+\eta$. We therefore obtain 
$$3\a_2+\a_3+\a_4+\a_5=(\a_2+\a_3)+(\a_2+\a_4)+(\a_2+\a_5)<\frac65+3\eta.$$ 
In addition, any sum of $3$ variables in $\rho_6(n)$ must exceed $\frac35-\eta$ and therefore 
$$3\a_2<\frac65+3\eta-(\frac35-\eta)=\frac35+4\eta\longrightarrow \a_2<\frac15+\frac{4\eta}{3}.$$
We conclude that 
$$\rho_4(n)=\rho_0(n)+\rho_6(n)$$
with 
$$\rho_6(n)=\threesum{n=p_2p_3p_4p_5p_6}{1/5-2\eta\leq \a_3<\a_2,1/5-2\eta\leq\a_5\leq \a_6,\a_4>\a_3}{\a_i+\a_j<2/5+\eta\ \forall i<j,\a_i+\a_j+\a_k>3/5-\eta\ \forall i<j<k}1.$$
To complete the proof we will show that 
$$\sum_{x\leq n\leq 2x}\rho_6(n)=(5+o(1))\sum_{x\leq n\leq 2x}\rho_5(n)$$
with $\rho_5(n)$ as in Lemma \ref{rho3}:
$$\rho_5(n)=\threesum{n=p_1p_2p_3p_4p_5}{1/5-2\eta\leq \a_4<\a_3<\a_2<\a_1,\a_5\geq \a_4}{\a_i+\a_j<2/5+\eta\ \forall i<j,\a_i+\a_j+\a_k>3/5-\eta\ \forall i<j<k}1.$$
We observe that 
\begin{eqnarray*}
\#\{n\in [x,2x]:p^2|n\text{ for some }p\in [\l,x^{2/5+\eta}]\}&\leq& \sum_{\l\leq n\leq x^{2/5+\eta}}\frac{x}{n^2}\\
&\ll& \frac{x}{\l}=x^{4/5+2\eta}.\\
\end{eqnarray*}
It follows, since $\rho_5(n),\rho_6(n)\ll 1$, that the contribution of $n$ which are not squarefree to both sums is $O(x^{4/5+2\eta})$ and hence negligible.  

When restricted to squarefree numbers, the functions $\rho_5(n)$ and $\rho_6(n)$ have the same support:
\begin{equation*}
\begin{split}
\{n:n=p_1p_2p_3p_4p_5,\frac15-2\eta\leq \a_1<\ldots<\a_5<\frac25+\eta,\\
\a_i+\a_j<\frac25+\eta\ \forall i<j,\a_i+\a_j+\a_k>\frac35-\eta\ \forall i<j<k\}.
\end{split}
\end{equation*}
For $n$ in this set, Lemma \ref{permlem} shows that $\rho_5(n)=4$ and $\rho_6(n)=20$.  The result follows.  
\end{proof}

We may now complete the proof of Lemma \ref{minorant}.  We have shown that 
$$\rho(n,(2x)^{1/2})=\rho_0(n)+\rho_5(n)+\rho_6(n),$$
$\rho_5(n)+\rho_6(n)\geq 0$ and 
$$\sum_{x\leq n\leq 2x}(\rho_5(n)+\rho_6(n))=\frac{x}{\log x}\left(6\int_{E(\eta)} f(\bs \a)d\bs \a+o(1)\right).$$
The lemma follows on taking 
$$\rho(n)=\rho(n,(2x)^{1/2})-\rho_5(n)-\rho_6(n),$$
which is supported on integers all of whose prime factors exceed $\l$.  

\section{Proof of Lemma \ref{sievelem}}

We will describe the necessary changes to the arguments of Polymath \cite{polymath8b}.  A similar argument is also given in the work of Baker and Zhao \cite{bakerz}.  Define $\mathrm{DHL}[k,m]$ as in \cite[Claim 3.1]{polymath8b}.  In order to show that 
 $$H_m\ll m\exp(c_0m),$$
for a given constant $c_0$, it suffices to show that $\mathrm{DHL}[k,m+1]$ holds whenever $m\geq 1$ and $k\geq C\exp(c_0m)$ (for some sufficiently large absolute constant $C$).  

The claim $\mathrm{DHL}[k,m+1]$ will be established using \cite[Lemma 3.4]{polymath8b}.  The difference in our approach is our method for obtaining a lower bound for the sum in \cite[(14)]{polymath8b}.  Specifically, if $\rho(n)$ is a minorant for the indicator function of the primes we have 
$$\twosum{x\leq n\leq 2x}{n\equiv b\pmod W}\nu(n)\theta(n+h_i)\geq \log x\twosum{x\leq n\leq 2x}{n\equiv b\pmod W}\nu(n)\rho(n+h_i),$$
for any nonnegative function $\nu(n)$.  We will give an asymptotic formula for the sum on the right hand side.

Our sieve function $\nu(n)$ will be of an identical form to those used in \cite{polymath8b}, namely the square of a linear combination of the divisor sums \cite[(16)]{polymath8b}. We therefore require asymptotics for sums of the form 
$$\twosum{x\leq n\leq 2x}{n\equiv b\pmod W}\rho(n+h_i)\prod_{i'=1}^k\lambda_{F_{i'}}(n+h_{i'})\lambda_{G_{i'}}(n+h_{i'}).$$ 
We suppose, as in the statement of Lemma \ref{sievelem}, that $\rho(n)$ is supported on numbers all of whose prime factors exceed $x^\xi$. Then, if the functions $F_i,G_i$ are supported on $[0,\xi]$, it is enough to evaluate sums of the form  
$$\twosum{x\leq n\leq 2x}{n\equiv b\pmod W}\rho(n+h_i)\prod_{1\leq i'\leq k,i'\ne i}\lambda_{F_{i'}}(n+h_{i'})\lambda_{G_{i'}}(n+h_{i'})$$ 
(compare with \cite[(21)]{polymath8b}).  In our case, this sum will be handled by a modification of \cite[Theorem 3.5]{polymath8b}.  For a function $F$ we write, as in \cite{polymath8b},
$$S(F)=\sup\{x:f(x)\ne 0\}.$$

\begin{lem}
Let $k \geq 2$ be fixed, let $(h_1,\dots,h_k)$ be a fixed admissible $k$-tuple, and let $b\pmod W$ be such that $b+h_i$ is coprime to $W$ for each $i=1,\dots,k$.  Let $1 \leq i_0 \leq k$ be fixed, and for each $1 \leq i \leq k$ distinct from $i_0$, let $F_{i}, G_{i}: [0,+\infty) \to \R$ be fixed smooth compactly supported functions.

Let $\rho(n):[x,2x]\rightarrow \R$ be a function which is equidistributed in arithmetic progressions to squarefree, $x^\delta$-smooth moduli $q\leq x^\theta$.  Suppose that $\rho(n)$ is supported on integers having no prime factors smaller than $x^\d$ and that 
$$\sum_{n\leq x\leq 2x}\rho(n)=\frac{x}{\log x}(1-c_1+o(1)),$$
for a constant $c_1$.

Finally, suppose that if $i\ne i_0$ we have 
$$S(F_i),S(G_i)<\delta$$
and that 
$$\twosum{1\leq i\leq k}{i\ne i_0}(S(F_i)+S(G_i))<\theta.$$
We may then conclude that 
\begin{equation*}
\begin{split}
\twosum{x\leq n\leq 2x}{n\equiv b\pmod W}\rho(n+h_{i_0})\prod_{1\leq i\leq k,i\ne i_0}\lambda_{F_i}(n+h_i)\lambda_{G_i}(n+h_i)\\
=(c(1-c_1)+o(1))B^{1-k}\frac{x}{\phi(W)\log x},
\end{split}
\end{equation*}
with $B$ as in \cite[(12)]{polymath8b} and $c$ as in \cite[Theorem 3.5]{polymath8b}.  
\end{lem}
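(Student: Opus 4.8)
The plan is to run the proof of \cite[Theorem 3.5]{polymath8b} essentially unchanged, the only modifications occurring at the two places where the arithmetic of the primes is used: the equidistribution of $\theta$ over the relevant squarefree moduli (from Bombieri-Vinogradov to smooth moduli) and the count $\sum_{x\le m\le 2x}\theta(m)\sim x$. These are replaced by the equidistribution of $\rho$ to squarefree $x^\delta$-smooth moduli $q\le x^\theta$ (Definition~\ref{smoothlevel}) and by $\sum_{x\le m\le 2x}\rho(m)=(1-c_1+o(1))x/\log x$ respectively; the latter identity accounts at once for the factor $1-c_1$ in the conclusion and for the extra $\log x$ in the denominator of the main term. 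Note that the sum to be estimated already involves only the divisor sums with $i\ne i_0$; the factor $\lambda_{F_{i_0}}\lambda_{G_{i_0}}$ was removed beforehand (on the support of $\rho$ a divisor below $x^\delta$ of an $x^\delta$-rough integer equals $1$), as explained before the statement.

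First I would expand $\lambda_{F_i}(n+h_i)=\sum_{d_i\mid n+h_i}\mu(d_i)F_i\left(\tfrac{\log d_i}{\log x}\right)$ and the analogous $\lambda_{G_i}$ for $i\ne i_0$, multiply out, and interchange the order of summation so that the divisors $(\bs d,\bs d')=(d_i,d_i')_{i\ne i_0}$ lie outside and the sum over $n$ inside. Since $S(F_i),S(G_i)<\delta$ we have $d_i,d_i'<x^\delta$, hence each $[d_i,d_i']$ is squarefree and $x^\delta$-smooth; after discarding, with negligible loss, the tuples for which the $[d_i,d_i']$ are not pairwise coprime or not coprime to $W$, the Chinese Remainder Theorem turns the inner sum into $\sum_{m\equiv a\,(q)}\rho(m)$ over a single residue class $a\bmod q$ with $q=W\prod_{i\ne i_0}[d_i,d_i']$. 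As $W=x^{o(1)}$ and $\sum_{i\ne i_0}(S(F_i)+S(G_i))<\theta$, this $q$ is squarefree, $x^\delta$-smooth, and at most $x^{\theta-\epsilon}$ for all tuples in play once $\epsilon>0$ is fixed small enough (the $\delta$ in the hypotheses being the one attached by Definition~\ref{smoothlevel} to this $\epsilon$). Writing the inner sum as $\frac1{\phi(q)}\sum_{(m,q)=1}\rho(m)+\Delta(\rho;a(q))$ and weighting the discrepancies by the bounded coefficients from the divisor sums, the total error is, since only $x^{o(1)}$ tuples produce a given $q$, at most $x^{o(1)}\sum_{q\le x^{\theta-\epsilon},\,q\mid P}|\Delta(\rho;a(q))|\ll x\log^{-A}x$, negligible beside the main term $\asymp x/(\phi(W)\log x)$.

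For the main term the support hypothesis on $\rho$ is decisive: every $m$ with $\rho(m)\ne 0$ has all prime factors $\ge x^\delta$ while $q$ is $x^\delta$-smooth, so $(m,q)=1$ automatically and $\sum_{(m,q)=1}\rho(m)=\sum_m\rho(m)=(1-c_1+o(1))x/\log x$. Thus each tuple contributes $(1-c_1+o(1))\frac{x}{\log x}\cdot\frac1{\phi(W)}\prod_{i\ne i_0}\frac1{\phi([d_i,d_i'])}$ times its divisor sum coefficient, and summing over $(\bs d,\bs d')$ reproduces --- up to the overall factor $1-c_1$ and the normalisation by $\log x$ --- precisely the divisor sum evaluated in \cite[Theorem 3.5]{polymath8b}, which supplies the constant $c$ and the factor $B^{1-k}$. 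This yields the asserted asymptotic $(c(1-c_1)+o(1))B^{1-k}x/(\phi(W)\log x)$.

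I expect the main obstacle to be the error estimate of the second paragraph, and specifically the fact that the residue class $a\bmod q$ genuinely depends on the divisor tuple (a prime $p<x^\delta$ is attached to whichever index $i$ has $p\mid[d_i,d_i']$): one therefore needs the distribution estimate for $\rho$ with uniformity in the residue, subject only to coprimality, rather than for a single fixed residue class. This is the shape in which the smooth Bombieri-Vinogradov estimates of \cite{polymath8a} are available and in which Definition~\ref{smoothlevel} should be read --- the implied constant there is independent of $a$ --- and it is exactly the point handled in \cite[Theorem 3.5]{polymath8b}; combined with the divisor-type bound $x^{o(1)}$ for the number of tuples per modulus, it closes the argument. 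Everything else --- the finer-than-dyadic/smooth partition bookkeeping, the removal of the non-coprime tuples, and the harmless passage from $[x,2x]$ to $[x,2x+h_k]$ needed for the shift $n\mapsto n+h_{i_0}$ --- is routine and carried out exactly as in \cite{polymath8b}.
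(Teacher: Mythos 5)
Your proposal is correct and follows the same route as the paper: both reduce to the argument of Polymath's Sections 4.3--4.4, replacing the Bombieri--Vinogradov input by the hypothesis of equidistribution to smooth moduli and the Prime Number Theorem by the hypothesis $\sum_{x\le n\le 2x}\rho(n)=(1-c_1+o(1))x/\log x$, and both hinge on the observation that the support condition on $\rho$ makes $(m,q)=1$ automatic for $x^\delta$-smooth $q$, so the coprimality constraint drops out of the main term. The paper states this more tersely, but the substance is identical.
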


\begin{proof}
This is almost identical to the work in \cite[Sections 4.3 and 4.4]{polymath8b}.  The only difference is that rather than an appeal to the Prime Number Theorem we use our assumptions on $\rho$ to obtain 
$$\twosum{x\leq n\leq 2x}{(n,q)=1}\rho(n)=\sum_{x\leq n\leq 2x}\rho(n)=\frac{x}{\log x}(1-c_1+o(1)),$$
for any $x^\delta$-smooth $q$.  
\end{proof}

Using this result and the arguments of \cite[Section 5.2]{polymath8b} we may obtain the following modification of \cite[3.10]{polymath8b}.

\begin{lem}
Let $k\geq 2$ and $m\geq 1$ be fixed integers.  Suppose there exists a function $\rho(n)$ satisfying all the hypotheses of the previous lemma.  If 
$$M_k^{[\frac{2\delta}{\theta}]} > \frac{2m}{\theta(1-c_1)},$$
with $M_k^{[\alpha]}$ as in \cite[Theorem 3.10]{polymath8b}, then $\mathrm{DHL}[k,m+1]$ holds.
\end{lem}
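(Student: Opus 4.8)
The plan is to import, essentially verbatim, the machinery of Polymath in \cite[\S5]{polymath8b}, making the single substitution of our function $\rho(n)$ in place of the indicator function of the primes at the node $i_0$. Recall that \cite[Theorem 3.10]{polymath8b} derives $\mathrm{DHL}[k,m+1]$ from the existence of a suitable symmetric smooth cutoff $F$ on the simplex for which the associated ratio of quadratic forms exceeds $\frac{2m}{\theta}$, where the forms are built from the sums estimated in the previous lemma. First I would recall the variational setup: one takes the sieve weight $\nu(n)$ to be the square of a linear combination $\sum_F \lambda_F$ of divisor sums supported on $[0,\xi]$ (with $\xi=\delta$), as in \cite[(16)]{polymath8b}. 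Using $\rho(n+h_i)\le \theta(n+h_i)/\log x$ at each node in turn, the crucial lower bound for the sum in \cite[(14)]{polymath8b} is reduced, by the previous lemma, to evaluating $k$ quadratic-form integrals, each of which acquires a factor $(1-c_1+o(1))$ instead of $(1+o(1))$ relative to the ``all primes'' case.

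The key step is then to track how this uniform factor $(1-c_1)$ propagates through the optimisation in \cite[\S5.2]{polymath8b}. In the notation of \cite[Theorem 3.10]{polymath8b}, the relevant quantity is $M_k^{[\alpha]}$, a supremum over admissible $F$ of a ratio whose numerator is $k$ times an integral involving $\rho$ (hence scaled by $1-c_1$) and whose denominator is the ``pure'' quadratic form (unscaled). Consequently the criterion ``ratio $>\frac{2m}{\theta}$'' from \cite[Theorem 3.10]{polymath8b} becomes, after dividing through by $1-c_1$, the criterion
$$M_k^{[2\delta/\theta]}>\frac{2m}{\theta(1-c_1)},$$
since the support condition on the $F_i,G_i$ (namely $\sum_{i\ne i_0}(S(F_i)+S(G_i))<\theta$ together with $S(F_i),S(G_i)<\delta$) is exactly the restriction encoded by the superscript $[2\delta/\theta]$ in \cite[Theorem 3.10]{polymath8b}. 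I would spell out this bookkeeping: the admissible region for the cutoff $F$ in the variational problem is the same simplex as in \cite{polymath8b}, but with the extra constraint that each coordinate marginal is bounded by $\delta/\theta$ (so that every pairwise sum of supports stays below $2\delta/\theta$ and each individual support stays below $\delta/\theta\le \delta$), which is precisely what the bracket notation records.

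The main obstacle, and the point that needs care rather than cleverness, is verifying that the single altered node does not disturb the lower-order terms or the error terms in \cite[\S5.2]{polymath8b}: one must check that the $o(1)$ in the asymptotic of the previous lemma is uniform over the (bounded number of, for fixed $k$) choices of $F_i,G_i$ arising in the linear combination, and that the equidistribution hypothesis on $\rho$ to $x^\delta$-smooth moduli $q\le x^\theta$ is strong enough to cover all the moduli appearing after the $W$-trick and the expansion of the divisor sums — this is where the smoothness of the moduli, as opposed to arbitrary moduli, is essential, exactly as in \cite{polymath8b}. Once this uniformity is in hand the proof is a transcription, so I would present it as such: state that the argument of \cite[\S5.2]{polymath8b} applies mutatis mutandis with each occurrence of the prime-counting asymptotic replaced by the previous lemma, yielding the displayed inequality as the sufficient condition for $\mathrm{DHL}[k,m+1]$.
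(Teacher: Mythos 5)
The paper offers no proof of this lemma at all: it simply states that it follows from the previous lemma ``and the arguments of \cite[Section 5.2]{polymath8b}.'' Your proposal is a faithful unpacking of that single-sentence reference, correctly identifying the essential mechanism — that the factor $(1-c_1)$ coming from the minorant's density appears uniformly in each of the $k$ quadratic-form integrals and therefore divides the threshold $\frac{2m}{\theta}$ of Polymath's Theorem 3.10 to give $\frac{2m}{\theta(1-c_1)}$, while the support constraints on the $F_i,G_i$ in the previous lemma become, after Polymath's rescaling by $\theta/2$, precisely the $[2\delta/\theta]$ restriction. (One small bookkeeping point worth tightening: after the rescaling, the bound $S(F_i),S(G_i)<\delta$ becomes a bound of $2\delta/\theta$ on each individual rescaled support, not $\delta/\theta$ as you wrote; your phrase ``coordinate marginal bounded by $\delta/\theta$'' and the parenthetical about ``pairwise sums'' should be adjusted accordingly, but this does not affect the conclusion.) In short, this takes the same approach as the paper, just with the details spelled out rather than cited.
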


It is shown in \cite[Section 6]{polymath8b} that for any $\alpha>0$ we have 
$$M_k^{[\alpha]}\geq \log k-O_\alpha(1).$$
Therefore, given a $\rho(n)$ as in the previous two lemmas, we may establish $\mathrm{DHL}[k,m+1]$ provided that 
$$\log k > \frac{2m}{\theta(1-c_1)}+O_{\theta,\delta}(1),$$
that is 
$$k\geq C\exp(c_0m)$$
with 
$$c_0=\frac{2}{\theta(1-c_1)}$$
and a constant $C$ which may depend on $\theta$ and $\delta$.  

Finally we suppose that $\rho(n)$ satisfies the hypotheses of Lemma \ref{sievelem}, so that it has exponent of distribution $\theta$ to smooth moduli.  Then, for any $\epsilon>0$ there exists a $\delta>0$  such that $\rho(n)$ is equidistributed in arithmetic progressions to $x^\d$-smooth moduli $q\leq x^{\theta-\epsilon}$.  If necessary we may replace $\delta$ by $\min(\delta,\xi)$ so that $\rho(n)$ is supported on numbers with no prime factor smaller than $x^\delta$.  The above then shows that $\mathrm{DHL}[k,m+1]$ holds for 
$$k\geq C\exp(c_0m)$$
with 
$$c_0=\frac{2}{(\theta-\epsilon)(1-c_1)}$$
and a $C$ depending on $\epsilon$.  By taking a sufficiently small $\epsilon$ we conclude that for any
$$c_0>\frac{2}{\theta(1-c_1)}$$
we have 
$$H_m\ll m\exp(c_0m).$$
Lemma \ref{sievelem} follows since the factor $m$ may be removed by working with a slightly smaller $c_0$.

\section{Proof of Theorem \ref{mainthm}}

We compute an upper bound for 
$$c_1(\eta)=6\int_{E(\eta)} f(\bs \a)d\bs \a$$
for $\eta=\frac{22}{3295}$.  A computer calculation shows that the volume of $E(\frac{22}{3295}$ is at most $3\times 10^{-10}$.  In addition, the maximum of the integrand in that region is 
$$\left(\frac15-2\eta\right)^{-5}\leq 4415.$$
It follows that 
$$c_1(\frac{22}{3295})<6\times 3\times 10^{-10}\times 4415<8\times 10^{-6}$$ 
and thus 
$$\theta(\eta)(1-c_1(\eta))>\left(\frac12+\frac{7}{300}+\frac{17}{120}\times \frac{22}{3295}\right)\left(1-8\times 10^{-6}\right)>0.52427.$$ 
Theorem \ref{mainthm} now follows if we choose a $\eta$ sufficiently close to $\frac{22}{3295}$, since $\frac{2}{0.52427}<3.815$.

We note that the above integral could be computed much more accurately.  However, since our existing bound is already very small this would only give a very slight improvement in Theorem \ref{mainthm}.  The main limitation is the restriction to $\eta<\frac{22}{3295}$ rather than the losses in the minorant.  This restriction on $\eta$ was imposed in Lemma \ref{hblem} and it appears that for larger values we cannot avoid an inconveniently large discard at that point of the argument.  In other words, there is a discontinuity in our method at $\eta=\frac{22}{3295}$.

\newpage
\addcontentsline{toc}{section}{References} 
\bibliographystyle{plain}
\bibliography{../biblio}

\vspace*{.5cm}

Roger C. Baker,

Department of Mathematics,

Brigham Young University,

Provo, UT 84602, U. S. A.

{\tt baker@math.byu.edu} 
\bigskip 

Alastair J. Irving

Centre de recherches math\'ematiques,

Universit\'e de Montr\'eal,

Pavillon Andr\'e-Aisenstadt,

2920 Chemin de la tour, Room 5357,

Montr\'eal, (Qu\'ebec), H3T 1J4, Canada

{\tt alastair.j.irving@gmail.com} 
 \end{document}